\pgfplotsset{compat = newest}
\newtheorem{theorem}{Theorem}
\theoremstyle{plain}
\newtheorem{definition}{Definition}
\newtheorem{lemma}{Lemma}
\newtheorem{remark}{Remark}
\numberwithin{equation}{section}
\newcommand{\naturals}{\mathbb{N}}
\newcommand{\complexes}{\mathbb{C}}
\newcommand{\spec}[1]{\operatorname{\sigma}{\left(#1\right)}}
\newcommand{\tr}[1]{\operatorname{tr}{#1}}
\newcommand{\id}[1]{\mathrm{id}_{#1}}
\newcommand{\comm}[2]{\left[#1,#2\right]}
\newcommand{\acomm}[2]{\left\{#1,#2\right\}}
\newcommand{\hadj}{*}
\newcommand{\matr}[1]{\mathbb{M}_{#1}(\complexes)}
\newcommand{\matra}[2]{\mathbb{M}_{#1}(#2)}
\newcommand{\matrd}{\matr{d}}
\newcommand{\cpe}[1]{\operatorname{\mathsf{CP}}(#1)}
\newcommand{\dis}[1]{\operatorname{\mathrm{Dis}}(#1)}
\newcommand{\cdis}[1]{\operatorname{\mathrm{Dis}_\infty}(#1)}
\newcommand{\cocpe}[1]{\operatorname{\mathsf{coCP}}(#1)}
\newcommand{\dece}[1]{\operatorname{\mathsf{D}}(#1)}
\newcommand{\transpose}{\tau}
\newcommand{\unit}{\mathds{1}}
\begin{document}
\title[Decomposable semigroups on C*-algebras and D-divisible dynamical maps]{Decomposable semigroups on C*-algebras and D-divisible dynamical maps}
\author{Krzysztof Szczygielski}
\address[K. Szczygielski]
{Institute of Theoretical Physics and Astrophysics, Faculty of Mathematics, Physics and Informatics, University of Gda\'nsk, Wita Stwosza 57, 80-308 Gda\'nsk, Poland}%
\email[K. Szczygielski]{krzysztof.szczygielski@ug.edu.pl}

\begin{abstract}
We analyze semigroups of decomposable maps on C*-algebras in context of the algebraic structure of associated infinitesimal generators. Case of von Neumann algebras, including $B(\mathcal{H})$ for $\mathcal{H}$ a Hilbert space, is also addressed. We then elaborate on D-divisible (decomposably divisible) dynamical maps on the Banach space of trace class operators. Our analysis extends earlier results on decomposable dynamical maps on matrix algebras (J.~Phys.~A: Math.~Theor.~\textbf{56} 485202) and provides a partial generalization of the seminal work of Lindblad (Commun.~Math.~Phys.~\textbf{48} 119-130) on completely positive semigroups.
\end{abstract}

\maketitle

\section{Introduction}

It was shown in a seminal paper by G.~Lindblad \cite{Lindblad1976} that a family $(e^{tL})_{t\geqslant 0}$ of linear maps acting on a unital C*-algebra $\mathscr{A}$ is a uniformly continuous semigroup of completely positive (CP), unital maps if and only if its \emph{generator} $L$ is a \emph{completely dissipative} *-map on $\mathscr{A}$. Complete dissipativity means that $L(\unit)=0$ and, for every $n\in\naturals$, an extension $L_n = \id{n}\otimes L$ acting on a C*-algebra $\matra{n}{\mathscr{A}}$ is \emph{dissipative}, i.e.~satisfies inequality
\begin{equation}\label{eq:Cdiss}
    L_n (X^\hadj X) - L_n (X^\hadj)X - X^\hadj L_n (X) \geqslant 0
\end{equation}
for all $X\in\matra{n}{\mathscr{A}}$. Furthermore, $(e^{tL})_{t\geqslant 0}$ is a \emph{quantum dynamical semigroup} on a von Neumann algebra $\mathscr{A}$, i.e.~a uniformly continuous semigroup of weakly-* continuous unital CP maps on $\mathscr{A}$, if and only if $L$ is completely dissipative and weakly-* continuous. For a perhaps most robust case of $\mathscr{A}=B(\mathcal{H})$, a von Neumann algebra of all bounded linear operators on a Hilbert space $\mathcal{H}$, such $L$ admits a structure 
\begin{equation}\label{eq:StandardForm}
    L(A) = i\comm{H}{A} + \phi(A) - \frac{1}{2}\acomm{\phi(\unit)}{A}
\end{equation}
for all $A\in B(\mathcal{H})$, where $H \in B(\mathcal{H})$ is self-adjoint and $\phi : B(\mathcal{H})\to B(\mathcal{H})$ is CP and weakly-* continuous (here, $\comm{A}{B} = AB-BA$ and $\acomm{A}{B}=AB+BA$ respectively denote the standard commutator and anticommutator of operators; $\unit$ will always denote a unit in an algebra). Formula \eqref{eq:StandardForm} defines the most general form of a generator of uniformly continuous dynamical semigroup on $B(\mathcal{H})$ in dual (i.e.~Heisenberg) picture and is often referred to as the celebrated \emph{Lindblad \emph{(or \emph{standard})} form} in a literature, or interchangeably, as the \emph{GKSL \emph{(Gorini-Kossakowski-Sudarshan-Lindblad)} form} independently developed by Gorini \emph{et al.} \cite{Gorini1976} in matrix algebra case.

Recently, a generalization of the result of \cite{Gorini1976} was obtained in \cite{Szczygielski2023} in a case $\mathcal{H}=\complexes^d$, $B(\mathcal{H}) = \matrd$ under a less restrictive assumption that resulting semigroup is not specifically CP but rather \emph{decomposable}. In particular, by adopting proof techniques of \cite{Gorini1976} (see also \cite{Alicki2006,Breuer2002,Rivas2012}) it was shown that a semigroup $(e^{tL})_{t\geqslant 0}$ on $\matrd$ is unital and decomposable (actually trace preserving and decomposable) if and only if $L$ is of a form \eqref{eq:StandardForm} for Hermitian matrix $H$ and \emph{decomposable} map $\phi : \matrd\to\matrd$, i.e.~slightly more general than \eqref{eq:StandardForm}. The aim of this article is to extend results of \cite{Szczygielski2023} onto a general C*-algebraic case. In particular, we show correspondence between Lindblad's notion of dissipativity and decomposability of semigroups on C*-algebras and give a prescription for structure of the generator in a weakly-* continuous case of von Neumann algebras, similarly to \cite{Lindblad1976}.

The article is structured as follows. In the section \ref{sec:Background} we present a concise summary of necessary mathematical notions and previous results on the matter, including main results of the paper by Lindblad. Our results are divided into three main parts. Cases of both unital and nonunital semigroups on C*-algebras are treated respectively in sections \ref{sec:UnitalSemigroups} (Theorems \ref{thm:CstarDecomposition}--\ref{thm:WstarKrausForm}) and \ref{sec:NonunitalSemigroups} (Theorems \ref{thm:CstarDecompositionNonunital} and \ref{thm:NonunitalStandardForm}), while in the section \ref{sec:DdivQDM} we remark on the D-divisibility of quantum dynamical maps on $B_1 (\mathcal{H})$, the Banach space of trace class operators (Theorem \ref{thm:DynMapsStandardForm}).

\section{Background}\label{sec:Background}

\subsection{Positive maps}

As we will work with decomposable maps, we start with a recollection of notions regarding positive maps and their subclasses and, since our interest is focused on semigroups, we restrict to endomorphisms. A bounded, linear map $\phi : \mathscr{A}\to\mathscr{A}$ on a partially ordered C*-algebra $\mathscr{A}$ is called a \emph{positive map} if it maps a cone of positive elements into itself, $\phi(a^\hadj a) \geqslant 0$. If an extension $\phi_n = \id{n}\otimes\phi$ acting on a tensor C*-algebra $\matra{n}{\mathscr{A}}$ via $\phi_n([a_{ij}]) = [\phi(a_{ij})]$ is positive for some $n$, we say $\phi$ is \emph{$n$-positive}. If $\phi$ is $n$-positive for all $n\in\naturals$ we call such map \emph{completely positive} (CP); we will write $\phi\in\cpe{\mathscr{A}}$. Similarly, for $\transpose$ denoting the \emph{transposition} on $\matr{n}$, a map $\phi$ will be called \emph{$n$-copositive} if $\transpose\otimes\phi$ is positive for some $n$, and \emph{completely copositive} (coCP) if it is $n$-copositive for all $n\in\naturals$; we will denote this as $\phi\in\cocpe{\mathscr{A}}$. It is straightforward to notice that $\phi$ is $n$-copositive (resp.~coCP) if and only if $\transpose\circ\phi$ is $n$-positive (resp.~CP), where $\transpose$ in the latter means the \emph{transposition} on $\mathscr{A}$ represented as a C*-subalgebra of $B(\mathcal{H})$ for some Hilbert space $\mathcal{H}$ (we will use $\transpose$ symbol regardless of the underlying algebra to denote a transposition). We recall that a transposition $\transpose : B(\mathcal{H})\to B(\mathcal{H})$ may be defined as
\begin{equation}\label{eq:transpositionDef}
    \tau(A) = A^\tau = J A^\hadj J, \quad A\in B(\mathcal{H}),
\end{equation}
where $J : \mathcal{H}\to\mathcal{H}$ is an antilinear, idempotent isometry of complex conjugation of vectors in $\mathcal{H}$ with respect to a chosen orthonormal basis.
Finally, a bounded positive map $\varphi : \mathscr{A}\to\mathscr{A}$ is called \emph{decomposable}, or $\varphi\in\dece{\mathscr{A}}$, if it lays in the convex hull of $\cpe{\mathscr{A}}\cup\cocpe{\mathscr{A}}$, i.e.~may be expressed as
\begin{equation}\label{eq:DecMapDecomposition}
    \varphi = \phi_1 + \phi_2
\end{equation}
for some $\phi_1 \in \cpe{\mathscr{A}}$, $\phi_2\in\cocpe{\mathscr{A}}$. Woronowicz \cite{Woronowicz1976} showed all positive maps between matrix algebras $\matr{n}$ and $\matr{m}$ are decomposable when $mn\leqslant 6$, which fails to hold in generality.

Let $\mathscr{A}\subset B(\mathcal{H})$ for a Hilbert space $\mathcal{H}$. Dilation theorems of Stinespring \cite{Stinespring_1955} and Størmer \cite{Stormer_1982} imply that for any $\varphi\in \dece{\mathscr{A}}$ there exist Hilbert spaces $\mathcal{K}_1$, $\mathcal{K}_2$, *-homomorphism $\pi_1 : \mathscr{A}\to B(\mathcal{K}_1)$,  *-antihomomorphism $\pi_2 : \mathscr{A}\to B(\mathcal{K}_2)$ and linear operators $V_1 : \mathcal{H} \to \mathcal{K}_1$, $V_2 : \mathcal{H} \to \mathcal{K}_2$ such that
\begin{equation}\label{eq:DecDilation}
    \varphi (a) = V^{\hadj}_{1} \pi_1(a) V_1 + V^{\hadj}_{2} \pi_2(a) V_2, \quad a\in\mathscr{A},
\end{equation}
where we recovered both CP and coCP parts of $\varphi$, i.e.~$\phi_1 = V^{\hadj}_{1} \pi_1(\cdot) V_1$, $\phi_2 = V^{\hadj}_{2} \pi_2(\cdot) V_2$. Equivalently, $\varphi\in\dece{\mathscr{A}}$ if and only if $\varphi(a) = V^\hadj \pi(a) V$ for $V : \mathcal{H}\to \mathcal{K}_1 \oplus \mathcal{K}_2$, $V = V_1 + V_2$ and $\pi : \mathscr{A}\to B(\mathcal{K}_1 \oplus \mathcal{K}_2)$, $\pi = \pi_1 + \pi_2$ being a Jordan morphism.
\subsection{Completely positive semigroups}

Let now $(e^{tL})_{t\geqslant 0}$ be a one-parameter, uniformly continuous semigroup generated by a *-map $L\in B(\mathscr{A})$ on a unital C*-algebra $\mathscr{A}$. As we already mentioned in the Introduction, it was shown by Lindblad \cite[Theorem 1]{Lindblad1976} that all maps $e^{tL}$ are unital and CP if and only if $L$ is completely dissipative, i.e.~when $L(\unit) = 0$ and $L_n = \id{n}\otimes L$ satisfies dissipativity condition \eqref{eq:Cdiss} for all $X \in \matra{n}{\mathscr{A}}$ and all $n\in\naturals$. Further, $(e^{tL})_{t\geqslant 0}$ is a \emph{dynamical semigroup}, i.e.~a uniformly continuous semigroup of unital, CP and weakly-* continuous maps on a von Neumann algebra if and only if $L$ is completely dissipative and weakly-* continuous. Completely dissipative generators are further specified by two results. First \cite[Proposition 5]{Lindblad1976}, for any weakly-* continuous completely dissipative $L$ on a von Neumann algebra $\mathscr{A}$ there exists weakly-* continuous $\phi\in\cpe{\mathscr{A}}$ and a self-adjoint $H\in\mathscr{A}$ such that
\begin{equation}\label{eq:StandardForm2}
    L(a) = i\comm{H}{a} + \phi(a) - \frac{1}{2}\acomm{\phi(\unit)}{a}, \quad a\in\mathscr{A}.
\end{equation}
Second, the converse statement \cite[Proposition 6]{Lindblad1976} says that for any $\phi\in\cpe{\mathscr{A}}$ and $H\in\mathscr{A}$, $H^\hadj = H$, a *-map $L$ given by \eqref{eq:StandardForm2} is completely dissipative; here, it is only assumed that $\mathscr{A}$ is a C*-algebra, so this result has a broader scope. Finally, for $\mathscr{A}$ being a von Neumann algebra $B(\mathcal{H})$, application of all preceding results together with Kraus lemma yield \cite[Theorem 2]{Lindblad1976} that $(e^{tL})_{t\geqslant 0}$ is a uniformly continuous dynamical semigroup on $B(\mathcal{H})$ if and only if $L$ is completely dissipative and weakly-* continuous, which in turn happens if and only if
\begin{equation}
    L(A) = i\comm{H}{A} + \sum_n \left(V_{n}^{\hadj} A V_{n} - \frac{1}{2}\acomm{V_{n}^{\hadj} V_{n}}{A}\right)
\end{equation}
where $H=H^\hadj$ and $\sum_n V_{n}^{\hadj} V_{n}$ weakly-* converges in $B(\mathcal{H})$.

\section{Decomposable semigroups}\label{sec:DecSemigroups}
In this section we focus on semigroups of decomposable maps, with unital case treated in Section \ref{sec:UnitalSemigroups} and nonunital in Section \ref{sec:NonunitalSemigroups}. Some of our results will concern weakly-* continuous maps on von Neumann algebra, i.e.~generalization of quantum dynamical semigroups towards decomposable case -- naturally, we will call them \emph{decomposable dynamical semigroups} (see the definition below). We adopt a following notation from \cite{Lindblad1976}: for any set $\mathcal{X}$ of bounded linear maps on a von Neumann algebra $\mathscr{A}$ we denote with $\mathcal{X}_\sigma$ its subset consisting of maps continuous with respect to the weak-* topology on $\mathscr{A}$. It is known that $B(\mathscr{A})_\sigma$ is a uniformly closed subset of $B(\mathscr{A})$. From here onwards, all *-algebras are assumed unital.

\begin{definition}
    Let $\mathscr{A}$ be a von Neumann algebra. A uniformly continuous semigroup $(e^{tL})_{t\geqslant 0}$ of decomposable, unital weakly-* continuous maps on $\mathscr{A}$ will be called a decomposable dynamical semigroup.
\end{definition}

We stress here that the name \emph{decomposable dynamical semigroup} not necessarily implies such object can \emph{a priori} describe a realizable quantum evolution, since physicality of non-CP maps is disputable in general. We however adopt this name for convenience and rather focus on its mathematical properties as a generalization of a CP semigroup onto larger class of maps.

\subsection{General properties}

By Lumer-Phillips and Hille-Yosida theorems, the \emph{generator} $L\in B(\mathscr{A})$ of the semigroup is a *-map satisfying
\begin{equation}
    L = \lim_{t\to 0^+} \left\| L - t^{-1}(e^{tL} - \id{}) \right\|,
\end{equation}
i.e.~is a uniform derivative of $e^{tL}$ at $t=0$. In consequence, $L$ is necessarily weakly-* continuous.

The decomposition \eqref{eq:DecMapDecomposition} is highly nonunique in general. Unfortunately this fact poses a problem for our case: in particular, even though $e^{tL}$ is smooth in $t$ and decomposes into
\begin{equation}\label{eq:SemigroupDecomposition}
    e^{tL} = \phi_t + \transpose\circ\psi_t
\end{equation}
for some $\phi_t,\psi_t\in\cpe{\mathscr{A}}$ for all $t\geqslant 0$, nonuniqueness disallows us from claiming not only analytical properties, but even existence of well-defined functions $t\mapsto\phi_t$ and $t\mapsto\psi_t$. Therefore, some of our results will be explicitly formulated under the assumption that $\phi_t$ and $\psi_t$ exist as smooth functions; we will call $e^{tL}$ \emph{smoothly decomposable} in such case. We however suppose that it is possible, in principle, to show rigorously that such smooth decomposition can be shown to exist independently of the dimension, for any smooth curve in $B(\mathscr{A})$.

\begin{remark}\label{rem:TheRemark}
    The smooth decomposability requirement can be relaxed so that $\phi_t$ and $\psi_t$ appearing in decomposition \eqref{eq:SemigroupDecomposition} are only assumed to have right derivatives at $t=0$. Such alleviation does not influence our results, simultaneously allowing for much more freedom in selecting both the CP and coCP parts of the decomposition.
\end{remark}

For any *-algebra $\mathscr{X}$ and any $T \in B(\mathscr{X})$ let us define the \emph{dissipation function} $D_T : \mathscr{X}\to \mathscr{X}$ by
\begin{equation}\label{eq:DissFunction}
    D_T (x) = T(x^\hadj x) - T(x^\hadj)a - x^\hadj T(x), \quad x\in\mathscr{X},
\end{equation}
following Lindblad \cite{Lindblad1976}. Just like therein, a significant role will be played by maps enjoying $D_T (x) \geqslant 0$ for all $x\in\mathscr{X}$; such $T$ will be said to have a \emph{dissipation property}, or $T \in \dis{\mathscr{X}}$. Similarly, if $T_n = \id{n}\otimes T$ is in $\dis{\matra{n}{\mathscr{X}}}$ for all $n\in\naturals$, i.e.~every extension $T_n$ has a dissipation property, we will say $T$ has a \emph{complete dissipation property} and write $T \in \cdis{\mathscr{X}}$. Note that when $T$ has a dissipation property, then necessarily
\begin{equation}
    D_T(\unit) = -T(\unit) \geqslant 0, 
\end{equation}
so $T(\unit) \leqslant 0$. Evidently, in contrast to the approach exploited by Lindblad, we do not demand $\unit \in \ker{T}$ so classes $\dis{\mathscr{X}}$ and $\cdis{\mathscr{X}}$ are broader than classes of dissipative and completely dissipative maps, respectively, discussed extensively in \cite{Lindblad1976}.

If a positive map $\phi$ on a C*-algebra satisfies $\phi(\unit) \leqslant \unit$ we will call such map \emph{subunital}. Two following results, Lemmas \ref{prop:SubunitalCPsem} and \ref{lemma:ExpcoCP}, will be of importance:

\begin{lemma}\label{prop:SubunitalCPsem}
    Let $M$ be a *-map on a C*-algebra $\mathscr{A}$. If $M$ has a complete dissipation property and satisfies $M(\unit) \leqslant 0$ then it generates a semigroup $(e^{tM})_{t\geqslant 0}$ of CP, subunital maps on $\mathscr{A}$.
\end{lemma}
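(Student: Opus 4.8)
The plan is to reduce the statement to Lindblad's characterization of unital CP semigroups (Theorem 1 of \cite{Lindblad1976}) by peeling off the obstruction to unitality. Write $K = M(\unit)$; since $M$ is a *-map, $K^\hadj = K$, and by hypothesis $K \leqslant 0$. I would introduce the two bounded *-maps $N(a) = \frac{1}{2}\acomm{K}{a}$ and $L = M - N$, so that $M = L + N$ and $L(\unit) = K - \frac{1}{2}\acomm{K}{\unit} = K - K = 0$. The summand $N$ is completely harmless: it generates the explicit semigroup $e^{tN}(a) = e^{tK/2} a e^{tK/2}$, which is manifestly CP (a conjugation $V^\hadj(\cdot)V$ with $V = e^{tK/2}$) and, because $K \leqslant 0$ forces $\supnorm{e^{tK/2}} \leqslant 1$, is subunital with $e^{tN}(\unit) = e^{tK} \leqslant \unit$. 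Everything therefore hinges on understanding $L$.

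The decisive — and least obvious — step is to show that $L$ remains completely dissipative, i.e.~$L \in \cdis{\mathscr{A}}$, despite the subtraction. A direct computation gives, at every amplification level and every $X$, the identity $D_{L_n}(X) = D_{M_n}(X) + X^\hadj(\id{n}\otimes K)X$, and since $K \leqslant 0$ the added term is negative, so one cannot argue naively. The resolution is to exploit the full strength of complete dissipation in its polarized form: $M \in \cdis{\mathscr{A}}$ implies positivity of all block matrices $\bigl[M(y_i^\hadj y_j) - M(y_i^\hadj)y_j - y_i^\hadj M(y_j)\bigr]_{i,j}$ over finite families $y_1, \dots, y_m \in \mathscr{A}$ (this follows by restricting the dissipation inequality for $M_m$ to matrices supported on a single row). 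Applying this positivity at level two with $y_1 = \unit$ and $y_2 = x$, and compressing the resulting $2\times 2$ block against $(-x, \unit)$, yields precisely $D_M(x) + x^\hadj K x \geqslant 0$; running the same argument for each amplification $M_n$ gives $D_{L_n}(X) \geqslant 0$ for all $X$. Hence $L \in \cdis{\mathscr{A}}$ with $L(\unit) = 0$, and this verification is the crux of the whole proof: it is exactly here that the conditional positivity encoded in $\cdis{\mathscr{A}}$ (via the adjoined $\unit$ row and column) must be used to offset the negative contribution of $K$.

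With $L$ completely dissipative and unit-preserving, Lindblad's Theorem 1 supplies that $(e^{tL})_{t\geqslant 0}$ is a semigroup of unital CP maps. To recombine $L$ and $N$ I would invoke the Lie--Trotter product formula for the bounded generators, $e^{tM} = \lim_{n\to\infty}\bigl(e^{(t/n)L} e^{(t/n)N}\bigr)^n$, convergent in the operator norm of $B(\mathscr{A})$. Each factor is CP, compositions and finite products of CP maps are CP, and the cone of CP maps is norm-closed, so every $e^{tM}$ is CP. Subunitality then follows cleanly now that positivity is available: setting $g(t) = e^{tM}(\unit)$ and using $g'(t) = e^{tM}(M(\unit)) = e^{tM}(K)$, integration gives $g(t) = \unit + \int_0^t e^{uM}(K)\,du$, and since each $e^{uM}$ is positive while $K \leqslant 0$ the integrand is $\leqslant 0$, whence $g(t) \leqslant \unit$. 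This exhibits $(e^{tM})_{t\geqslant 0}$ as a semigroup of CP subunital maps. The genuine obstacle is the middle paragraph; the Trotter recombination and the integral bound for subunitality are routine once complete dissipation of $L$ is in hand.
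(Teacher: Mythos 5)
Your proposal is correct, and its overall architecture coincides with the paper's proof: split $M = G + \frac{1}{2}\acomm{M(\unit)}{\cdot}$ so that $G(\unit) = 0$, invoke Lindblad's Theorem 1 to make $e^{tG}$ unital and CP, observe that the anticommutator part generates the explicitly CP semigroup $a \mapsto e^{tM(\unit)/2}\, a\, e^{tM(\unit)/2}$, and recombine via the Lie--Trotter formula using norm closedness of the CP cone. Where you genuinely add value is the middle paragraph. The paper dismisses the claim $G \in \cdis{\mathscr{A}}$ with the phrase ``one checks with a bit of algebra,'' but the algebra alone gives $D_{G_n}(X) = D_{M_n}(X) + X^\hadj (\id{n}\otimes M(\unit)) X$ with a \emph{negative} correction term (since $M(\unit) \leqslant 0$), so positivity is not immediate --- exactly as you point out. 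Your polarized form of complete dissipation (positivity of the block matrix $\bigl[M(y_i^\hadj y_j) - M(y_i^\hadj)y_j - y_i^\hadj M(y_j)\bigr]_{i,j}$, obtained by evaluating $D_{M_m}$ on a single-row matrix), applied at level two with $y_1 = \unit$, $y_2 = x$ and compressed against the column $(-x, \unit)$, does yield $D_M(x) + x^\hadj M(\unit)\, x \geqslant 0$ --- I checked the computation, including its amplified version $D_{M_n}(X) + X^\hadj(\id{n}\otimes M(\unit))X \geqslant 0$ obtained by running the same argument on $M_n$ --- and this is precisely the justification the paper leaves implicit (it is also the inequality that reappears as the hypothesis \eqref{eq:NonUnitalLineq} in the nonunital case). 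The only other deviation is cosmetic: for subunitality the paper verifies each Trotter factor is subunital (using the spectral measure of $M(\unit)$ to get $e^{tM(\unit)} \leqslant \unit$) and passes to the limit, whereas you first secure complete positivity and then integrate $\frac{d}{dt}\, e^{tM}(\unit) = e^{tM}(M(\unit)) \leqslant 0$; both routes are sound.
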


\begin{proof}
    Let us define a new map $G : \mathscr{A}\to\mathscr{A}$ by
    \begin{equation}
        M(a) = G(a) + \frac{1}{2}\acomm{M(\unit)}{a}, \quad a\in\mathscr{A}.
    \end{equation}
    Then, one checks with a bit of algebra that $G$ is a *-map, satisfies $G(\unit) = 0$ and is in $\cdis{\mathscr{A}}$, so it is completely dissipative in the sense of Lindblad. By \cite[Theorem 1]{Lindblad1976}, it generates a semigroup $(e^{tG})_{t\geqslant 0}$ of completely positive, unital maps on $\mathscr{A}$. On the other hand, a map $K = \frac{1}{2}\acomm{M(\unit)}{\cdot}$ can be easily checked to generate a semigroup
    \begin{equation}
        e^{tK}(a) = e^{\frac{1}{2}tM(\unit)} a e^{\frac{1}{2}tM(\unit)}, \quad a\in\mathscr{A},
    \end{equation}
    which in turn is completely positive by self-adjointness of $M(\unit)$. Let $M(\unit)$ be negative semidefinite, so that $\spec{M(\unit)}$ consists of nonpositive real numbers and let $E$ denote its spectral measure, so $M(\unit)= \int_{\spec{M(\unit)}}\lambda \, dE(\lambda)$ for $\lambda\leqslant 0$; then automatically $e^{tK}(\unit) = e^{tM(\unit)}$ satisfies
    \begin{equation}
        \unit - e^{tM(\unit)} = \int\limits_{\spec{M(\unit)}}(1-e^{t\lambda}) \, dE(\lambda) \geqslant 0
    \end{equation}
    i.e.~$e^{tK}$ is subunital. Using Lie-Trotter formula, we have therefore
    \begin{equation}
        e^{tM} = \lim_{n\to\infty} \left(e^{\frac{t}{n}G}e^{\frac{t}{n}K}\right)^{n}
    \end{equation}
    where each map $e^{\frac{t}{n}G}e^{\frac{t}{n}K}$ is CP and subunital; hence, the limit exists in norm as a CP subunital map by norm closedness of $\cpe{\mathscr{A}}$.
\end{proof}

\begin{lemma}\label{lemma:ExpcoCP}
    Let $\mathscr{A}$ be a C*-algebra and let $\psi\in\cpe{\mathscr{A}}$. Then $e^{t\,\transpose\circ\psi} \in \dece{\mathscr{A}}$.
\end{lemma}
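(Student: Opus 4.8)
The plan is to expand the exponential as a power series in $\Psi := \transpose\circ\psi$ and to exploit a $\mathbb{Z}_2$--grading of its powers into completely positive and completely copositive pieces. First I would record that $\psi\in\cpe{\mathscr{A}}$ forces $\Psi=\transpose\circ\psi\in\cocpe{\mathscr{A}}$, which is immediate from the characterisation recalled in the Background: a map is coCP precisely when its composition with $\transpose$ is CP, and here $\transpose\circ\Psi=\transpose\circ\transpose\circ\psi=\psi$ is CP. The guiding observation is then that the even powers $\Psi^{2m}$ are CP while the odd powers $\Psi^{2m+1}$ are coCP, because in the product $(\transpose\psi)^{n}$ the transpositions cancel in pairs.

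The heart of the argument, and the step I expect to be the main obstacle, is to make this parity statement rigorous. The key sublemma is that conjugation by the transposition preserves complete positivity: if $\psi\in\cpe{\mathscr{A}}$ then $\transpose\circ\psi\circ\transpose\in\cpe{\mathscr{A}}$. Note this is delicate, since $\transpose$ is itself only positive (not CP), so one cannot invoke a naive ``composition of positive maps'' argument. I would prove it by passing to a Stinespring representation $\psi(a)=V^{\hadj}\pi(a)V$ with $\pi$ a $*$-homomorphism, and using the antilinear conjugation $J$ defining $\transpose$ in \eqref{eq:transpositionDef} to exhibit $\transpose\circ\psi\circ\transpose$ again in Stinespring form; concretely, in Kraus form the operators $K_k$ get replaced by their entrywise conjugates $\overline{K_k}$, which manifestly preserves the completely positive structure. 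Granting the sublemma, $\Psi^2=(\transpose\circ\psi\circ\transpose)\circ\psi$ is a composition of two CP maps, hence CP; by induction $\Psi^{2m}=(\Psi^2)^{m}\in\cpe{\mathscr{A}}$, and $\Psi^{2m+1}=\Psi\circ\Psi^{2m}=\transpose\circ(\psi\circ\Psi^{2m})\in\cocpe{\mathscr{A}}$ since $\psi\circ\Psi^{2m}$ is CP. One caveat to verify along the way is that $\transpose$ genuinely maps $\mathscr{A}$ into itself, which is part of the standing convention that $\transpose$ acts on $\mathscr{A}$ represented inside $B(\mathcal{H})$; otherwise the compositions above are not defined.

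Finally I would split the norm--convergent series $e^{t\Psi}=\sum_{n\geqslant 0}\tfrac{t^{n}}{n!}\Psi^{n}$ into its even and odd parts. For $t\geqslant 0$ every coefficient $t^{n}/n!$ is nonnegative, so each partial sum of the even part is a finite positive combination of CP maps, hence CP, and likewise each partial sum of the odd part is coCP. Since both series converge in operator norm, and the cones $\cpe{\mathscr{A}}$ and $\cocpe{\mathscr{A}}$ are norm--closed (the former as used already in the proof of Lemma \ref{prop:SubunitalCPsem}; the latter because $\cocpe{\mathscr{A}}=\transpose\circ\cpe{\mathscr{A}}$ and precomposition with the isometry $\transpose$ is itself an isometry of $B(\mathscr{A})$, so it carries closed sets to closed sets), the two limits lie in $\cpe{\mathscr{A}}$ and $\cocpe{\mathscr{A}}$ respectively. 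This exhibits $e^{t\,\transpose\circ\psi}$ in the form \eqref{eq:DecMapDecomposition} as a sum of a CP and a coCP map, whence $e^{t\,\transpose\circ\psi}\in\dece{\mathscr{A}}$.
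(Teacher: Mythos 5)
Your proof is correct and follows essentially the same route as the paper: your key sublemma that $\transpose\circ\psi\circ\transpose\in\cpe{\mathscr{A}}$ is exactly the map $\psi^\prime$ satisfying $\transpose\circ\psi=\psi^\prime\circ\transpose$ whose existence the paper simply asserts, and your even/odd splitting of the exponential series with norm-closedness of the cones $\cpe{\mathscr{A}}$ and $\cocpe{\mathscr{A}}$ is the decomposition \eqref{eq:ExpcoCP}. If anything you are slightly more careful than the paper, since you actually justify the complete positivity of $\psi^\prime$ via Stinespring/Kraus and write the odd part correctly with $(\psi^\prime\circ\psi)^{n}$, whereas the paper's \eqref{eq:ExpcoCP} contains a typographical slip, displaying $(\psi^\prime\circ\psi)^{2n+1}$ where the identity stated just above it gives $(\psi^\prime\circ\psi)^{n}$.
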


\begin{proof}
There exists a CP map $\psi^\prime$ such that $\transpose\circ\psi = \psi^\prime \circ \transpose$. With easy algebra one confirms that
\begin{equation}
    (\transpose\circ\psi)^{2n} = (\psi^\prime\circ\psi)^n, \quad (\transpose\circ\psi)^{2n+1} = \transpose\circ\psi\circ (\psi^\prime\circ\psi)^n
\end{equation}
for all $n\in\naturals$, which leads to
\begin{equation}\label{eq:ExpcoCP}
    e^{t\,\transpose\circ\psi} = \sum_{n=0}^{\infty} \frac{t^{2n}}{(2n)!}(\psi^\prime\circ\psi)^n + \transpose\circ\psi\circ\sum_{n=0}^{\infty} \frac{t^{2n+1}}{(2n+1)!}(\psi^\prime\circ\psi)^{2n+1}
\end{equation}
after power series expanding and grouping terms for even and odd $n$. Both above series converge uniformly in $B(\mathscr{A})$ and, since $\psi,\psi^\prime\in\cpe{\mathscr{A}}$, represent completely positive maps on $\mathscr{A}$; consequently $e^{t\,\transpose\circ\psi} \in \dece{\mathscr{A}}$. 
\end{proof}

\subsection{Unital case}
\label{sec:UnitalSemigroups}

We first focus on the case of unital semigroups. Below we state the main result of this section, which specifies the link between decomposable semigroups and dissipation properties of their generators.

\begin{theorem}\label{thm:CstarDecomposition}
    Let $\mathscr{A}$ be a C*-algebra. The following statements hold:
    \begin{enumerate}
        \item \label{thm:CstarDecompositionItem1} Let $(e^{tL})_{t\geqslant 0}$ be a smoothly decomposable, uniformly continuous semigroup of unital maps on $\mathscr{A}$. Then $L = L_1 + L_2$, where $L(\unit) = 0$, $L_1 \in \cdis{\mathscr{A}}$ and $L_2 \in \cocpe{\mathscr{A}}$.
        \item \label{thm:CstarDecompositionItem2} Let $L_1 \in \cdis{\mathscr{A}}$, $L_2 \in \cocpe{\mathscr{A}}$. Then $(e^{tL})_{t\geqslant 0}$, $L = L_1 + L_2$, is a uniformly continuous semigroup of decomposable subunital maps on $\mathscr{A}$. If $L_1(\unit) + L_2 (\unit) = 0$ then $e^{tL}$ is unital.
    \end{enumerate}
\end{theorem}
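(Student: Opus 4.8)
The plan is to prove the two directions of Theorem~\ref{thm:CstarDecomposition} separately, leaning on Lemmas~\ref{prop:SubunitalCPsem} and~\ref{lemma:ExpcoCP} and on Lindblad's characterization of completely dissipative generators.

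\medskip

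For item~\eqref{thm:CstarDecompositionItem2} (the converse, which I would do first as it is more constructive), I start from $L = L_1 + L_2$ with $L_1 \in \cdis{\mathscr{A}}$ and $L_2 \in \cocpe{\mathscr{A}}$. Writing $L_2 = \transpose \circ \psi$ for some $\psi \in \cpe{\mathscr{A}}$, the Lie--Trotter formula gives $e^{tL} = \lim_{n\to\infty}\bigl(e^{\frac{t}{n}L_1} e^{\frac{t}{n}L_2}\bigr)^n$. By Lemma~\ref{prop:SubunitalCPsem}, since $L_1 \in \cdis{\mathscr{A}}$ forces $L_1(\unit)\leqslant 0$, each $e^{\frac{t}{n}L_1}$ is CP and subunital; by Lemma~\ref{lemma:ExpcoCP}, each $e^{\frac{t}{n}L_2}$ is decomposable. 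The product of a CP map with a decomposable map is again decomposable (because $\cpe{\mathscr{A}}$ is closed under composition and absorbs both the CP and coCP summands), and I must also track subunitality. Since $\dece{\mathscr{A}}$ is a norm-closed cone, the Trotter limit lands in $\dece{\mathscr{A}}$; a short estimate on $e^{tL}(\unit)$ shows the limit is subunital. Finally, if $L_1(\unit) + L_2(\unit) = L(\unit) = 0$ then $t \mapsto e^{tL}(\unit)$ has vanishing derivative at every $t$ (since $\frac{d}{dt}e^{tL}(\unit) = e^{tL}(L(\unit)) = 0$), so $e^{tL}(\unit) = \unit$ for all $t$, giving unitality.

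\medskip

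For item~\eqref{thm:CstarDecompositionItem1} (the forward direction), I assume $e^{tL} = \phi_t + \transpose\circ\psi_t$ is smoothly decomposable with $\phi_t,\psi_t \in \cpe{\mathscr{A}}$. Differentiating at $t=0$ using $\phi_0 + \transpose\circ\psi_0 = \id{}$ yields $L = \dot\phi_0 + \transpose\circ\dot\psi_0$, suggesting the natural candidates $L_2 = \transpose\circ\dot\psi_0$ and $L_1 = L - L_2 = \dot\phi_0$. The tasks are then to show $L_2 \in \cocpe{\mathscr{A}}$ and $L_1 \in \cdis{\mathscr{A}}$. For $L_2$, I would argue that $\dot\psi_0$, as a right derivative at $t=0$ of the CP-valued curve $t\mapsto\psi_t$ with $\psi_0 \geqslant 0$, is forced to be completely positive on the appropriate cone---the standard argument being that the family $\psi_t$ is a smooth curve of CP maps so its derivative at a boundary point preserves complete positivity in the relevant directions; composing with $\transpose$ gives $L_2\in\cocpe{\mathscr{A}}$. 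For $L_1$, I compute the dissipation function $D_{L_1}(x)$ and its extensions $D_{(L_1)_n}(X)$, expressing them via $D_{L}$ minus the contribution of $L_2$, and use that $D_{e^{tL}} \geqslant 0$ in the limit $t\to 0^+$ (a hallmark of positivity-preserving semigroups) together with the complete positivity of $\dot\psi_0$ to conclude $D_{(L_1)_n}(X) \geqslant 0$ for all $n$, i.e.\ $L_1 \in \cdis{\mathscr{A}}$. Unitality $L(\unit)=0$ follows directly by differentiating $e^{tL}(\unit)=\unit$.

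\medskip

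The main obstacle I anticipate is the forward direction, specifically justifying that the right derivative $\dot\psi_0$ of the CP-valued curve is itself completely positive and extracting the complete dissipation property of $L_1$. The subtlety is that $\dot\psi_0$ need not be a positive map in the ordinary sense---only $\psi_t$ is CP for each fixed $t$---so I cannot simply assert positivity of the derivative; instead I must exploit that $\psi_0 \geqslant 0$ is a boundary point of the CP cone and argue via difference quotients $t^{-1}(\psi_t - \psi_0)$ that the relevant positivity is preserved in the limit, then transfer this into a lower bound on the dissipation function of $L_1$. Reconciling the nonuniqueness of the decomposition (Remark~\ref{rem:TheRemark}) with the well-definedness of these derivatives is exactly why the smooth decomposability hypothesis is imposed, and handling it cleanly is where the real work lies.
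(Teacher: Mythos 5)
Your item~\eqref{thm:CstarDecompositionItem2} matches the paper's proof: Trotter on $e^{tL} = \lim_n \bigl(e^{\frac{t}{n}L_1}e^{\frac{t}{n}L_2}\bigr)^n$, with $e^{\frac{t}{n}L_1}$ CP subunital by Lemma~\ref{prop:SubunitalCPsem} (using $L_1(\unit)\leqslant 0$), $e^{\frac{t}{n}L_2}\in\dece{\mathscr{A}}$ by Lemma~\ref{lemma:ExpcoCP}, and norm closedness of $\dece{\mathscr{A}}$; your unitality argument via $\frac{d}{dt}e^{tL}(\unit) = e^{tL}(L(\unit)) = 0$ is fine. In item~\eqref{thm:CstarDecompositionItem1}, your treatment of $L_2$ also resolves the way the paper does, but only if you pin down what you left fuzzy: you write ``$\psi_0 \geqslant 0$ is a boundary point of the CP cone,'' whereas what is true and needed is $\psi_0 = 0$ exactly (the paper infers $\phi_0 = \id{}$, $\psi_0 = 0$ from $\phi_0 + \transpose\circ\psi_0 = \id{}$ being CP). Because the base point is the apex of the cone, the difference quotients $t^{-1}(\psi_t - \psi_0) = t^{-1}\psi_t$ themselves lie in $\cpe{\mathscr{A}}$, and norm closedness of $\cpe{\mathscr{A}}$ gives $S = \dot\psi_0 \in \cpe{\mathscr{A}}$. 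If $\psi_0$ were merely a nonzero CP map, the derivative need not be CP, so your hedged version of the claim is not salvageable without $\psi_0 = 0$.

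The genuine gap is your argument for $L_1 \in \cdis{\mathscr{A}}$. The inequality ``$D_{e^{tL}} \geqslant 0$ in the limit $t\to 0^+$'' is false as stated: $D_{\id{}}(x) = -x^\hadj x \leqslant 0$, so $D_{e^{tL}}$ is in fact \emph{negative} near $t=0$. If instead you mean the Schwarz-type inequality $e^{tL}(X^\hadj X) \geqslant e^{tL}(X)^\hadj e^{tL}(X)$ on $\matra{n}{\mathscr{A}}$, that requires $2$-positivity of the amplified maps; decomposable maps are merely positive, for which Kadison's inequality holds only on self-adjoint elements and does not amplify. Worse, if your route did work it would establish complete dissipativity of $L$ itself, which by Lindblad's Theorem~1 \cite{Lindblad1976} would force every $e^{tL}$ to be CP --- contradicting the very generality the theorem is after; this is why the conclusion is a \emph{splitting} $L = L_1 + L_2$ and not $L\in\cdis{\mathscr{A}}$. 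Your fallback of writing $D_{(L_1)_n}$ as ``$D_L$ minus the contribution of $L_2$'' also fails for lack of sign control: $D_{L_2}(x)$ contains the cross terms $-\transpose(\psi(x^\hadj))\,x - x^\hadj\,\transpose(\psi(x))$, which have no definite sign even with $\dot\psi_0$ CP. The paper's proof never touches the full semigroup at this step: it notes that $\phi_t(\unit) + \transpose\circ\psi_t(\unit) = \unit$ makes each $\phi_t$ \emph{subunital} CP, applies the subunital Kadison--Schwarz inequality \eqref{eq:KSineq} to the amplifications $\phi_{n,t}$, and differentiates $f_X(t) = \phi_{n,t}(X^\hadj X) - \phi_{n,t}(X)^\hadj\phi_{n,t}(X) \geqslant 0$, $f_X(0)=0$, at $t=0^+$ to obtain $D_{R_n}(X)\geqslant 0$ directly, using norm closedness of the positive cone. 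Replacing your $L_1$ step with this argument repairs the proof.
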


\begin{proof}
Ad \ref{thm:CstarDecompositionItem1}. Let there exist families $(\phi_t)_{t\geqslant 0}\subset\cpe{\mathscr{A}}$, $(\psi_t)_{t\geqslant 0}\subset\cocpe{\mathscr{A}}$, both being smooth functions of $t$, such that
\begin{equation}
    e^{tL} = \phi_t + \transpose\circ\psi_t .
\end{equation}
Since $\phi_0 + \transpose\circ\psi_0 = \id{}$ is CP, we infer $\phi_0 = \id{}$, $\psi_0 = 0$. Both $\phi_t$, $\psi_t$ must admit uniform derivatives at $t=0$ (note Remark \ref{rem:TheRemark} about relaxing this condition), i.e.~there exist linear maps $R, S : \mathscr{A}\to\mathscr{A}$ such that
\begin{equation}\label{eq:RSlimits}
    \lim_{t\to 0^+} \left\| R - \frac{1}{t}(\phi_t - \id{}) \right\| = 0, \quad \lim_{t\to 0^+} \left\| S - \frac{1}{t}\psi_t \right\| = 0.
\end{equation}

Note that $\phi_t \in \cpe{\mathscr{A}}$ if and only if also $\id{n}\otimes\phi_t$ is completely positive on $\matra{n}{\mathscr{A}}$ for all $n\in\naturals$ and $t\geqslant 0$, which easily follows from Stinespring's dilation theorem \cite{Stinespring_1955}.
Maps $\phi_t$, $\psi_t$ are positive, satisfy $\phi_t(\unit) + \transpose\circ\psi_t (\unit) = \unit$, so
\begin{equation}
    \phi_t (\unit) \leqslant \unit, \quad \psi_t (\unit) \leqslant \unit
\end{equation}
for all $t\geqslant 0$, i.e.~are subunital. Then, $\phi_{n,t}=\id{n}\otimes\phi_t$, $\psi_{n,t}=\id{n}\otimes\psi_t$ are also subunital on $\matra{n}{\mathscr{A}}$. For a fixed $n$, denote
\begin{equation}
    f_X(t) = \phi_{n,t}(X^\hadj X) - \phi_{n,t}(X)^\hadj \phi_{n,t}(X).
\end{equation}
Any subunital CP map $\Phi$ on a C*-algebra satisfies $\|\Phi(\unit)\| \leqslant 1$ and $\Phi(\unit)^2 \leqslant \Phi(\unit)$ and so is subject to a version of Kadison-Schwarz inequality \cite{Paulsen2003}
\begin{equation}\label{eq:KSineq}
    \Phi(x^\hadj x) \geqslant \|\Phi(\unit)\| \Phi(x^\hadj x) \geqslant \Phi(x)^\hadj \Phi(x)
\end{equation}
for any $x$ in the algebra. Consequently, $f_X(t) \geqslant 0$ for all $X\in\matra{n}{\mathscr{A}}$, $n\in\naturals$ and $t\geqslant 0$. Smoothness of $\phi_t$ implies uniform differentiability of $f_X$ at $t=0$,
\begin{align}
    \frac{df_X}{dt}(0) &= \left.\frac{d\phi_{n,t}(X^\hadj X)}{dt}\right|_0 - \left.\frac{d\phi_{n,t}(X^\hadj)}{dt}\right|_0X - X^\hadj \left.\frac{d\phi_{n,t}(X)}{dt}\right|_0  \\
    &= R_{n}(X^\hadj X) - R_{n}(X^\hadj)X - X^\hadj R_{n}(X) \nonumber \\
    &= D_{R_{n}}(X) \nonumber
\end{align}
since $\phi_{n,0} = \id{}$, where we denoted $R_{n} = \id{n}\otimes R$. On the other hand $f_X(0) = 0$, so \eqref{eq:KSineq} yields $\frac{1}{t}\left(f_X(t)-f_X(0)\right) = \frac{1}{t}f_X(t)$ is positive for all $X\in\matra{n}{\mathscr{A}}$, $t\geqslant 0$ and remains positive in the limit $t\to 0^+$ since cone of positive elements is norm closed in $\matra{n}{\mathscr{A}}$, i.e.~$\frac{df_X}{dt}(0)= D_{R_{n}}(X) \geqslant 0$ for all $X$, $n$. This shows $R$ has a complete dissipation property.

As for $S$, note that $(\frac{1}{t}\psi_t)_{t\geqslant 0}$ is a net of CP maps on $\mathscr{A}$ converging to $S$ in norm; hence $S$ is also a CP map since $\cpe{\mathscr{A}}$ is closed in BW-topology and therefore norm closed in $B(\mathscr{A})$. Prescription \eqref{eq:transpositionDef} yields $\tau$ is a bounded, linear idempotent isometry on $\mathscr{A}$ and so $(\frac{1}{t}\psi_t)_{t\geqslant 0}$ converges to $S$ uniformly if and only if $(\transpose\circ \frac{1}{t}\psi_t)_{t\geqslant 0}$ converges to $\transpose\circ S$ uniformly. Therefore, using triangle inequality combined with \eqref{eq:RSlimits},
     \begin{equation}
         \lim_{t\to 0^+} \left\|R + \transpose\circ S - \frac{1}{t}(e^{tL}-\id{})\right\| = 0
     \end{equation}
and so $L = R + \transpose\circ S$. Now we identify $L_1 = R$, $L_2 = \transpose\circ S$ and note both maps preserve self-adjoint elements in $\mathscr{A}$, i.e.~are *-maps. $L(\unit)=0$ is obvious. This shows validity of the first statement of the Theorem.

Ad \ref{thm:CstarDecompositionItem2}. Take any $L_1\in \cdis{\mathscr{A}}$ and $L_2 \in \cocpe{\mathscr{A}}$. Then $e^{tL}$, for $L = L_1 + L_2$, may be expressed by Trotter product formula \cite{EngelNagel2000} as
\begin{equation}\label{eq:TrotterDec}
    e^{tL} = \lim_{n\to\infty} \left(e^{\frac{t}{n}L_1}e^{\frac{t}{n}L_2}\right)^{n}
\end{equation}
converging in norm for each $t\geqslant 0$. Since $L_1(\unit) \leqslant 0$, Lemma \ref{prop:SubunitalCPsem} yields $e^{\frac{t}{n}L_1}$ is CP and subunital, while $e^{\frac{t}{n}L_2}$ is decomposable by Lemma \ref{lemma:ExpcoCP}. This implies all maps appearing under the limit in \eqref{eq:TrotterDec} are decomposable, as is the limit $e^{tL}$ itself since $\dece{\mathscr{A}}$ is norm closed. Finally, condition $L_1(\unit) + L_2(\unit) = 0$ immediately implies unitality of the semigroup. This concludes the proof.
\end{proof}

\begin{theorem}\label{thm:WstarDecomposition}
    Let $\mathscr{A}$ be a von Neumann algebra. The following statements hold:
    \begin{enumerate}
        \item \label{thm:WstarDecompositionItem1} Let $(e^{tL})_{t\geqslant 0}$ be a smoothly decomposable dynamical semigroup on $\mathscr{A}$. Then $L = L_1 + L_2$, where $L(\unit) = 0$, $L_1 \in \cdis{\mathscr{A}}_\sigma$ and $L_2 \in \cocpe{\mathscr{A}}_\sigma$.
        \item \label{thm:WstarDecompositionItem2} Let $L_1 \in \cdis{\mathscr{A}}_\sigma$ and $L_2 \in \cocpe{\mathscr{A}}_\sigma$. Then $(e^{tL})_{t\geqslant 0}$, $L = L_1 + L_2$, is a uniformly continuous semigroup of decomposable, weakly-* continuous maps on $\mathscr{A}$. If $L_1(\unit) + L_2 (\unit) = 0$ then maps $e^{tL}$ are additionally unital, i.e.~$(e^{tL})_{t\geqslant 0}$ is a decomposable dynamical semigroup.
    \end{enumerate}
\end{theorem}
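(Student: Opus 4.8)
The plan is to run both statements in close parallel with the corresponding parts of Theorem \ref{thm:CstarDecomposition}, treating weak-* continuity as an extra bookkeeping layer rather than as a new mechanism. For statement \eqref{thm:WstarDecompositionItem1} I would first reproduce the purely algebraic part of the proof of Theorem \ref{thm:CstarDecomposition}\eqref{thm:CstarDecompositionItem1}: writing $e^{tL} = \phi_t + \transpose\circ\psi_t$ with smooth families $(\phi_t),(\psi_t)\subset\cpe{\mathscr{A}}$, passing to the right derivatives $R = \lim_{t\to 0^+}\frac1t(\phi_t-\id{})$ and $S = \lim_{t\to 0^+}\frac1t\psi_t$, and invoking the Kadison-Schwarz argument to conclude $R \in \cdis{\mathscr{A}}$, $S\in\cpe{\mathscr{A}}$ and $L = R + \transpose\circ S$ with $L(\unit)=0$. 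Setting $L_1 = R$ and $L_2 = \transpose\circ S$ then yields the algebraic decomposition, and what remains is to upgrade both summands to the normal classes $\cdis{\mathscr{A}}_\sigma$ and $\cocpe{\mathscr{A}}_\sigma$.

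The new content is weak-* continuity of the two summands. Since $(e^{tL})_{t\geqslant 0}$ is a dynamical semigroup, the generator $L$ is weakly-* continuous by the general properties recorded above, so $L \in B(\mathscr{A})_\sigma$; recall also that $B(\mathscr{A})_\sigma$ is a uniformly closed linear subspace. I would then exploit that each $e^{tL}$ is a \emph{normal} decomposable map, so that its Stinespring--Størmer dilation \eqref{eq:DecDilation} may be realized through normal representations $\pi_1,\pi_2$, giving a decomposition \eqref{eq:SemigroupDecomposition} with $\phi_t,\psi_t \in B(\mathscr{A})_\sigma$ -- the natural reading of smooth decomposability in the weakly-* continuous setting. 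The nets $(\frac1t\psi_t)_{t\geqslant 0}\subset\cpe{\mathscr{A}}_\sigma$ and $(\frac1t(\phi_t-\id{}))_{t\geqslant 0}$ then converge uniformly, so their limits $S$ and $R$ lie in $B(\mathscr{A})_\sigma$ by uniform closedness; since the transposition \eqref{eq:transpositionDef} is an isometric normal map, $L_2 = \transpose\circ S \in \cocpe{\mathscr{A}}_\sigma$ and $L_1 = R \in \cdis{\mathscr{A}}_\sigma$. One could equally obtain a single summand this way and deduce the other from $L = L_1 + L_2 \in B(\mathscr{A})_\sigma$ together with the subspace property. I expect this to be the main obstacle: the decomposition \eqref{eq:SemigroupDecomposition} is genuinely non-canonical, so the real work lies in arranging the smooth family of CP and coCP parts to run through normal maps; once that is in place, everything else is inherited from Theorem \ref{thm:CstarDecomposition} and the linear structure of $B(\mathscr{A})_\sigma$.

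For statement \eqref{thm:WstarDecompositionItem2} I would import the Trotter-product argument of Theorem \ref{thm:CstarDecomposition}\eqref{thm:CstarDecompositionItem2} essentially unchanged: $e^{tL} = \lim_{n\to\infty}(e^{\frac{t}{n}L_1}e^{\frac{t}{n}L_2})^n$ in norm, where Lemma \ref{prop:SubunitalCPsem} makes each $e^{\frac{t}{n}L_1}$ CP and subunital, Lemma \ref{lemma:ExpcoCP} makes each $e^{\frac{t}{n}L_2}$ decomposable, so that every factor, each finite product, and the norm limit are decomposable by closedness of $\dece{\mathscr{A}}$, with unitality following from $L_1(\unit)+L_2(\unit) = 0$. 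The only addition is weak-* continuity: as $L_1, L_2 \in B(\mathscr{A})_\sigma$ and a composition of normal maps is normal (the preadjoints compose), each partial sum of the norm-convergent series $e^{tL_i} = \sum_k \frac{t^k}{k!}L_i^k$ lies in $B(\mathscr{A})_\sigma$, whence $e^{tL_1}, e^{tL_2} \in B(\mathscr{A})_\sigma$ by uniform closedness; the finite products remain normal, and the Trotter limit, being a norm limit, is normal for the same reason. Thus $(e^{tL})_{t\geqslant 0}$ is a uniformly continuous semigroup of decomposable, weakly-* continuous maps, and a decomposable dynamical semigroup precisely when $L_1(\unit) + L_2(\unit) = 0$.
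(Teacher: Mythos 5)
Your proposal is correct and follows essentially the same route as the paper: part \eqref{thm:WstarDecompositionItem1} reduces to Theorem \ref{thm:CstarDecomposition}\eqref{thm:CstarDecompositionItem1} with weak-* continuity of $R$, $S$ (hence $L_1$, $L_2$) obtained from norm closedness of $B(\mathscr{A})_\sigma$, and part \eqref{thm:WstarDecompositionItem2} reuses the Trotter-product argument of Theorem \ref{thm:CstarDecomposition}\eqref{thm:CstarDecompositionItem2}, with normality of the factors and of the uniform limit again via closedness of $B(\mathscr{A})_\sigma$. If anything, you are slightly more careful than the paper, which tacitly takes $\phi_t,\psi_t\in B(\mathscr{A})_\sigma$ where you explicitly flag and address that point (via normal dilations, or by deducing one summand's normality from $L\in B(\mathscr{A})_\sigma$ and the subspace property).
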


\begin{proof}
For part \ref{thm:WstarDecompositionItem1}, existence and properties of $L_1$ and $L_2$ follow from part \ref{thm:CstarDecompositionItem1} of Theorem \ref{thm:CstarDecomposition}. Subset $B(\mathscr{A})_\sigma$ of weakly-* continuous maps is norm closed in $B(\mathscr{A})$, so operators $R$ and $S$ appearing therein are weakly-* continuous as uniform derivatives, and so are $L_1$ and $L_2$. For statement \ref{thm:WstarDecompositionItem2}, application of part \ref{thm:CstarDecompositionItem2} of Theorem \ref{thm:CstarDecomposition} yields $e^{tL}$ is decomposable. When $L_1$, $L_2$ are assumed weakly-* continuous, so are the maps appearing under the limit in \eqref{eq:TrotterDec}. Then, closedness of $B(\mathscr{A})_\sigma$ yields weak-* continuity of $e^{tL}$ as a uniform limit in the Trotter formula.
\end{proof}

\begin{theorem}\label{thm:WstarStandardForm}
    Let $\mathscr{A}$ be a von Neumann algebra. Set $L = L_1 + L_2$, where
    \begin{enumerate}
        \item $L_1\in \cdis{\mathscr{A}}_\sigma$,
        \item $L_2\in\cocpe{\mathscr{A}}_\sigma$,
        \item $L_1(\unit) + L_2(\unit) = 0$.
    \end{enumerate}
    Then there exists $\varphi\in\dece{\mathscr{A}}_\sigma$ and $H\in\mathscr{A}$, $H^\hadj = H$, such that
    \begin{equation}\label{eq:StandardForm3}
        L(a) = i\comm{H}{a} + \varphi(a) - \frac{1}{2}\acomm{\varphi (\unit)}{a}, \quad a\in\mathscr{A}.
    \end{equation}
\end{theorem}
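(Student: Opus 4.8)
The plan is to reduce the statement to Lindblad's structure theorem for weakly-* continuous completely dissipative generators (\cite[Proposition 5]{Lindblad1976}, recalled in Section \ref{sec:Background}) by first stripping the non-unital part off $L_1$ and then absorbing the completely copositive map $L_2$ into a single decomposable map $\varphi$.

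First I would reuse the shift trick from the proof of Lemma \ref{prop:SubunitalCPsem}: define $G : \mathscr{A}\to\mathscr{A}$ by
\[
    G(a) = L_1(a) - \tfrac{1}{2}\acomm{L_1(\unit)}{a}.
\]
As observed there, $G$ is a *-map satisfying $G(\unit) = 0$ and $G\in\cdis{\mathscr{A}}$, hence completely dissipative in Lindblad's sense. Since $L_1$ is weakly-* continuous and $a\mapsto\tfrac{1}{2}\acomm{L_1(\unit)}{a}$ is weakly-* continuous (being a combination of left and right multiplications by the fixed element $L_1(\unit)$), the map $G$ is weakly-* continuous as well, i.e.\ $G\in\cdis{\mathscr{A}}_\sigma$ with $G(\unit)=0$. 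Lindblad's Proposition 5 then supplies a weakly-* continuous $\phi\in\cpe{\mathscr{A}}$ and a self-adjoint $H\in\mathscr{A}$ with $G(a) = i\comm{H}{a} + \phi(a) - \tfrac{1}{2}\acomm{\phi(\unit)}{a}$, and therefore
\[
    L_1(a) = i\comm{H}{a} + \phi(a) - \tfrac{1}{2}\acomm{\phi(\unit)}{a} + \tfrac{1}{2}\acomm{L_1(\unit)}{a}.
\]

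Next I would note that $L_2\in\cocpe{\mathscr{A}}_\sigma$ is weakly-* continuous and decomposable (it is its own coCP part), and set $\varphi = \phi + L_2$. As a sum of a weakly-* continuous CP map and a weakly-* continuous coCP map, $\varphi$ is a weakly-* continuous positive map lying in the convex hull of $\cpe{\mathscr{A}}\cup\cocpe{\mathscr{A}}$, so $\varphi\in\dece{\mathscr{A}}_\sigma$ and $\varphi(\unit) = \phi(\unit) + L_2(\unit)$. Adding $L_2$ to the expression for $L_1$ and rewriting the CP terms through $\phi = \varphi - L_2$, the anticommutator contributions collect to give
\[
    L(a) = i\comm{H}{a} + \varphi(a) - \tfrac{1}{2}\acomm{\varphi(\unit)}{a} + \tfrac{1}{2}\acomm{L_1(\unit)+L_2(\unit)}{a}.
\]
The residual term vanishes precisely by hypothesis (3), $L_1(\unit)+L_2(\unit)=0$, which yields the asserted form \eqref{eq:StandardForm3}.

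I do not anticipate a substantive obstacle; the only points needing care are confirming that the shifted map $G$ stays weakly-* continuous and completely dissipative so that Lindblad's Proposition 5 applies, together with the bookkeeping of the two anticommutator terms. The unitality hypothesis (3) enters exactly to cancel the leftover $\tfrac{1}{2}\acomm{L_1(\unit)+L_2(\unit)}{a}$; dropping it would instead leave a subunital generator carrying an extra $\tfrac{1}{2}\acomm{L(\unit)}{a}$ contribution, consistent with the subunital statements of the preceding theorems.
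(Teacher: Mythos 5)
Your proposal is correct and follows the paper's proof essentially verbatim: the same shift $G(a) = L_1(a) - \tfrac{1}{2}\acomm{L_1(\unit)}{a}$ borrowed from Lemma \ref{prop:SubunitalCPsem}, the same appeal to \cite[Proposition 5]{Lindblad1976} for $\phi$ and $H$, and the same assembly of $\varphi$ from the CP part and the coCP map $L_2$. The only cosmetic differences are that the paper writes $L_2 = \transpose\circ\psi$ with $\psi\in\cpe{\mathscr{A}}_\sigma$ and substitutes $L_1(\unit) = -\transpose\circ\psi(\unit)$ directly, whereas you keep $L_2$ as is and cancel the residual term $\tfrac{1}{2}\acomm{L_1(\unit)+L_2(\unit)}{a}$ via hypothesis (3) — the same computation — and you make explicit the weak-* continuity check for $G$ that the paper asserts implicitly.
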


\begin{proof}
Following the same idea as in the proof of Lemma \ref{prop:SubunitalCPsem}, let us define a new map $G\in B(\mathscr{A})_\sigma$ by
\begin{equation}
    L_1(a) = G (a) + \frac{1}{2}\acomm{L_1 (\unit)}{a}
\end{equation}
which once again is checked to be completely dissipative in the sense of Lindblad. Application of \cite[Proposition 5]{Lindblad1976} yields existence of $\phi \in \cpe{\mathscr{A}}_\sigma$ and a self-adjoint $H\in\mathscr{A}$ such that
\begin{equation}
    L_1 (a) = i \comm{H}{a} + \phi(a) - \frac{1}{2}\acomm{\phi(\unit)}{a} + \frac{1}{2}\acomm{L_1 (\unit)}{a}.
\end{equation}
$L_2$ may be then expressed as $L_2 = \transpose\circ\psi$ for some $\psi\in\cpe{\mathscr{A}}_\sigma$. Condition $L_1(\unit) + L_2(\unit) = 0$ then yields $L_1(\unit) = -\transpose \circ \psi(\unit)$ which puts $L$ into the claimed form \eqref{eq:StandardForm3} for $\varphi = \phi + \transpose\circ\psi$ weakly-* continuous.
\end{proof}

\begin{theorem}\label{thm:CstarStandardForm}
    Let $\mathscr{A}$ be a C*-algebra. If $\varphi\in\dece{\mathscr{A}}$ and $H\in\mathscr{A}$ is self-adjoint then there exist maps $L_1, L_2 \in B(\mathscr{A})$ satisfying conditions
        \begin{enumerate}
        \item $L_1\in \cdis{\mathscr{A}}$,
        \item $L_2\in\cocpe{\mathscr{A}}$,
        \item $L_1(\unit) + L_2(\unit) = 0$,
    \end{enumerate}
    such that $L = L_1 + L_2$ is of a form \eqref{eq:StandardForm3}.
\end{theorem}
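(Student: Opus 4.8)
The plan is to read off $L_1$ and $L_2$ directly from a decomposition of $\varphi$ and then to verify the dissipation property by a short additive computation of the dissipation function; in the C*-case no limiting, continuity, or Lie--Trotter arguments should be required. Concretely, I would use decomposability of $\varphi$ at the outset: since $\varphi\in\dece{\mathscr{A}}$ we may write $\varphi = \phi + \transpose\circ\psi$ with $\phi,\psi\in\cpe{\mathscr{A}}$, the coCP summand being expressed as $\transpose\circ\psi$ via $\transpose^2 = \id{}$. I then propose
\[
    L_1(a) = i\comm{H}{a} + \phi(a) - \frac{1}{2}\acomm{\varphi(\unit)}{a}, \qquad L_2 = \transpose\circ\psi .
\]
The single design choice that matters here is to route the \emph{entire} damping term $-\frac{1}{2}\acomm{\varphi(\unit)}{\cdot}$ into $L_1$ and to keep only the genuinely copositive piece $\transpose\circ\psi$ in $L_2$: since $L_2$ must be completely copositive, hence in particular positive, no anticommutator correction can be placed there.

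The three bookkeeping conditions are then immediate. We have $L_2 = \transpose\circ\psi\in\cocpe{\mathscr{A}}$ because $\psi\in\cpe{\mathscr{A}}$; summing the two maps gives exactly $L(a) = i\comm{H}{a} + \varphi(a) - \frac{1}{2}\acomm{\varphi(\unit)}{a}$, i.e.\ the form \eqref{eq:StandardForm3}; and evaluating at the unit yields $L_1(\unit) = \phi(\unit) - \varphi(\unit) = -\transpose\circ\psi(\unit) = -L_2(\unit)$, so $L_1(\unit)+L_2(\unit)=0$.

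The only substantive step is to check $L_1\in\cdis{\mathscr{A}}$. Since $T\mapsto D_T$ is linear in $T$ and annihilates inner derivations (so $D_{i\comm{H}{\cdot}}=0$), additivity reduces the computation to two pieces: a direct one-line calculation gives $D_{-\frac{1}{2}\acomm{\varphi(\unit)}{\cdot}}(x) = x^\hadj\varphi(\unit)x$, while $D_\phi(x) = \phi(x^\hadj x) - \phi(x^\hadj)x - x^\hadj\phi(x)$. Hence $D_{L_1}(x) = \phi(x^\hadj x) - \phi(x^\hadj)x - x^\hadj\phi(x) + x^\hadj\varphi(\unit)x$. Because $\varphi(\unit) = \phi(\unit) + \transpose\circ\psi(\unit)\geqslant\phi(\unit)$ (transposition preserves positivity and $\psi(\unit)\geqslant 0$), I would bound $x^\hadj\varphi(\unit)x\geqslant x^\hadj\phi(\unit)x$ and thereby reduce to positivity of $\phi(x^\hadj x) - \phi(x^\hadj)x - x^\hadj\phi(x) + x^\hadj\phi(\unit)x$, which is the Kadison--Schwarz-type inequality following from $2$-positivity of $\phi$ (equivalently, it is exactly the content of Lindblad's Proposition 6 applied to the standard-form map built from $\phi$ and $H$). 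Replacing $\phi$, $\varphi(\unit)$, $x$ by their amplifications $\phi_n=\id{n}\otimes\phi$, $\unit_n\otimes\varphi(\unit)\geqslant 0$ and $X\in\matra{n}{\mathscr{A}}$ runs verbatim, giving $D_{(L_1)_n}(X)\geqslant 0$ for every $n$ and thus $L_1\in\cdis{\mathscr{A}}$.

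I expect no serious obstacle: the whole argument hinges on the correct placement of the damping term together with the sign bookkeeping in $D_{L_1}$, after which the positive contribution $x^\hadj\varphi(\unit)x$ from the anticommutator makes the estimate collapse onto the standard $2$-positivity inequality. In contrast to the von Neumann setting of Theorem \ref{thm:WstarStandardForm}, no weak-* continuity enters, so the statement is genuinely a short converse construction, and the ease of this direction reflects that $\cdis{\mathscr{A}}$ was defined precisely so as to absorb the nonvanishing $L_1(\unit)$.
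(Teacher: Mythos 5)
Your construction is correct and is in fact identical to the paper's: the same choice $L_1(a) = i\comm{H}{a} + \phi(a) - \frac{1}{2}\acomm{\varphi(\unit)}{a}$, $L_2 = \transpose\circ\psi$, the same immediate verifications of coCP-ness and of $L_1(\unit)+L_2(\unit)=0$, and the same reduction $D_{L_1}(x) = \phi(x^\hadj x) - \phi(x^\hadj)x - x^\hadj\phi(x) + x^\hadj\varphi(\unit)x$. Where you genuinely diverge is the final positivity check. The paper inserts the St{\o}rmer-type dilation \eqref{eq:DecDilation}, $\varphi(a) = V_1^\hadj\pi_1(a)V_1 + V_2^\hadj\pi_2(a)V_2$, and uses multiplicativity of the *-homomorphism $\pi_1$ to rewrite the amplified dissipation as an exact sum of two squares,
\begin{equation*}
D_{L_{1,n}}(X) = \left(\pi_{1,n}(X)V_{1,n} - V_{1,n}X\right)^\hadj\left(\pi_{1,n}(X)V_{1,n} - V_{1,n}X\right) + \left(V_{2,n}X\right)^\hadj V_{2,n}X \geqslant 0.
\end{equation*}
You instead discard the coCP contribution via the bound $\varphi(\unit)\geqslant\phi(\unit)$ (valid, since $\transpose\circ\psi(\unit)\geqslant 0$) and then invoke the $2\times 2$ operator-matrix Kadison--Schwarz inequality $\phi_n(X^\hadj X)-\phi_n(X^\hadj)X - X^\hadj\phi_n(X) + X^\hadj\phi_n(\unit)X \geqslant 0$, which requires only $2$-positivity of each amplification $\phi_n$ (equivalently, Lindblad's Proposition 6, as you say). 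Both routes are sound; yours avoids the dilation machinery entirely and makes visible that the level-$n$ inequality uses only $2n$-positivity of $\phi$, while the paper's dilation argument retains the coCP term as the explicit square $(V_{2,n}X)^\hadj V_{2,n}X$ instead of estimating it away, yielding an identity rather than a bound. One incidental payoff of your computation: it shows the paper's intermediate display, whose last term reads $X^\hadj\psi_n(\unit)X$, is slightly off -- consistency with the subsequent dilation expansion, which produces both $V_{1,n}^\hadj V_{1,n} = \unit\otimes\phi(\unit)$ and $V_{2,n}^\hadj V_{2,n} = \unit\otimes\transpose\circ\psi(\unit)$, requires the full term $X^\hadj\varphi_n(\unit)X$, exactly as you derived.
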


\begin{proof}
Let $\varphi = \phi + \transpose\circ\psi$ for some $\phi,\psi\in\cpe{\mathscr{A}}$ and take $H = H^\hadj\in\mathscr{A}$. Define $L_1, L_2 \in B(\mathscr{A})$ by
    \begin{equation}
        L_1 (a) = i \comm{H}{a} + \phi(a) - \frac{1}{2}\acomm{\varphi(\unit)}{a},
    \end{equation}
    \begin{equation}
        L_2 (a) = \transpose\circ\psi .
    \end{equation}
Then automatically $L_2$ is coCP and $L_1(\unit)+L_2(\unit)=0$. It remains to show $L_1$ has a complete dissipation property. Let $L_{1,n} = \id{n}\otimes L_1$. With easy algebra one finds, for any $X\in\matra{n}{\mathscr{A}}$, $n\in\naturals$,
\begin{equation}
    D_{L_{1,n}}(X) = \phi_n(X^\hadj X) - \phi_n(X^\hadj)X - X^\hadj \phi_n (X) + X^\hadj \psi_n(\unit) X
\end{equation}
where $\phi_n = \id{n}\otimes\phi$. Employing decomposition \eqref{eq:DecDilation} for $\varphi_n$, we obtain, with $V_{i,n} = \unit\otimes V_{i}$ and $\pi_{i,n} = \id{n}\otimes\pi_i$, $i=1,2$,
\begin{align}
    D_{L_{1,n}}(X) &= V_{1,n}^{\hadj} \pi_{1,n} (X^\hadj X) V_{1,n} - V_{1,n}^{\hadj} \pi_{1,n} (X^\hadj) V_{1,n} X \\
    &- X^\hadj V_{1,n}^{\hadj} \pi_{1,n} (X) V_{1,n} + X^\hadj V_{1,n}^{\hadj} V_{1,n} X + X^\hadj V_{2,n}^{\hadj} V_{2,n} X \nonumber \\
    &= \left(\pi_{1,n} (X)V_{1,n} - V_{1,n} X\right)^\hadj \left(\pi_{1,n} (X)V_{1,n} - V_{1,n} X\right) \nonumber \\
    &+ \left(V_{2,n} X\right)^\hadj \left(V_{2,n} X\right), \nonumber
\end{align}
so $D_{L_{1,n}}(X) \geqslant 0$ and $L_1 \in \cdis{\mathscr{A}}$.
\end{proof}

Theorems \ref{thm:WstarDecomposition}, \ref{thm:WstarStandardForm} and \ref{thm:CstarStandardForm} combined with the Kraus lemma yield the following result, which determines the most general form of a generator of a decomposable dynamical semigroup on $B(\mathcal{H})$. This is a direct generalization of \cite[Theorem 2]{Lindblad1976} onto the decomposable case, with the restriction of smooth decomposability condition.

\begin{theorem}\label{thm:WstarKrausForm}
    The following statements hold:
    \begin{enumerate}
        \item If $(e^{tL})_{t\geqslant 0}$ is a smoothly decomposable dynamical semigroup on $B(\mathcal{H})$ then $L$ admits a structure
        \begin{equation}\label{eq:DecL}
            L(A) = i\comm{H}{A} + \sum_{n=1}^{\infty} \left( V_{n}^{\hadj} A V_n + W_{n}^{\hadj} A^\transpose W_n - \frac{1}{2}\acomm{V_{n}^{\hadj} V_n + W_{n}^{\hadj} W_n}{A} \right),
        \end{equation}
        for $H=H^\hadj \in B(\mathcal{H})$ and two families $(V_n)_{n\in\naturals}, (W_n)_{n\in\naturals} \subset B(\mathcal{H})$ such that $\sum_{n=1}^{\infty} V_{n}^{\hadj} V_n$ and $\sum_{n=1}^{\infty} W_{n}^{\hadj} W_n$ weakly-* converge in $B(\mathcal{H})$.
        \item Conversely, if $L$ is of a form \eqref{eq:DecL} then $(e^{tL})_{t\geqslant 0}$ is a decomposable dynamical semigroup on $\mathscr{A}$.
    \end{enumerate}
\end{theorem}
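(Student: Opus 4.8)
The plan is to prove the two implications separately, in each case passing through the standard form \eqref{eq:StandardForm3} already established and then invoking the Kraus lemma on the completely positive constituents. The whole point is that the analytic substance — existence of the standard form, the dissipation property of the CP part, and the relevant closedness properties — is already contained in Theorems \ref{thm:WstarDecomposition}, \ref{thm:WstarStandardForm} and \ref{thm:CstarStandardForm}, so what remains is to translate $\varphi$ into Kraus operators and to track the transposition through the coCP term.

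For the forward direction (1), I would first apply Theorem \ref{thm:WstarDecomposition}(\ref{thm:WstarDecompositionItem1}) to write $L = L_1 + L_2$ with $L(\unit)=0$, $L_1\in\cdis{B(\mathcal{H})}_\sigma$ and $L_2\in\cocpe{B(\mathcal{H})}_\sigma$; since $L_1(\unit)+L_2(\unit)=L(\unit)=0$, Theorem \ref{thm:WstarStandardForm} yields a weakly-* continuous decomposable $\varphi$ and a self-adjoint $H\in B(\mathcal{H})$ with $L(A) = i\comm{H}{A} + \varphi(A) - \frac{1}{2}\acomm{\varphi(\unit)}{A}$. Decomposability (as produced in that proof) gives $\varphi = \phi + \transpose\circ\psi$ with $\phi,\psi\in\cpe{B(\mathcal{H})}_\sigma$, and the Kraus lemma applies to each: there are families $(V_n)$, $(\tilde W_n)$ with $\phi(A) = \sum_n V_n^\hadj A V_n$, $\psi(A) = \sum_n \tilde W_n^\hadj A \tilde W_n$, and $\sum_n V_n^\hadj V_n$, $\sum_n \tilde W_n^\hadj \tilde W_n$ weakly-* convergent.

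The key computation is to recast the coCP part in the form $W_n^\hadj A^\transpose W_n$. By \eqref{eq:transpositionDef}, $\transpose$ is a weakly-* continuous, *-preserving \emph{anti}homomorphism of $B(\mathcal{H})$ with $\transpose\circ\transpose=\id{}$. Setting $W_n = \transpose(\tilde W_n^\hadj)$, so that $W_n^\hadj = \transpose(\tilde W_n)$, the antihomomorphism property gives $\transpose(\tilde W_n^\hadj A \tilde W_n) = \transpose(\tilde W_n)\,\transpose(A)\,\transpose(\tilde W_n^\hadj) = W_n^\hadj A^\transpose W_n$, hence $(\transpose\circ\psi)(A) = \sum_n W_n^\hadj A^\transpose W_n$. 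Likewise $W_n^\hadj W_n = \transpose(\tilde W_n^\hadj \tilde W_n)$, so weak-* continuity of $\transpose$ makes $\sum_n W_n^\hadj W_n = \transpose(\sum_n \tilde W_n^\hadj \tilde W_n)$ weakly-* convergent. Since $\unit^\transpose = \unit$ we get $\varphi(\unit) = \sum_n(V_n^\hadj V_n + W_n^\hadj W_n)$, and substituting the three pieces into the standard form collapses exactly to \eqref{eq:DecL}. For the converse (2) I would run this backwards: given $L$ as in \eqref{eq:DecL}, set $\phi(A) = \sum_n V_n^\hadj A V_n$ (CP and weakly-* continuous by Kraus) and $\chi(A) = \sum_n W_n^\hadj A^\transpose W_n = \tilde\psi(A^\transpose)$ with $\tilde\psi(B)=\sum_n W_n^\hadj B W_n$ CP; then $\chi$ is coCP and weakly-* continuous, since $\transpose\circ\chi$ is CP by the commutation relation recorded in the proof of Lemma \ref{lemma:ExpcoCP}. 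Thus $\varphi=\phi+\chi\in\dece{B(\mathcal{H})}_\sigma$, $\varphi(\unit)=\sum_n(V_n^\hadj V_n + W_n^\hadj W_n)$, and $L$ is of the form \eqref{eq:StandardForm3}; the construction in Theorem \ref{thm:CstarStandardForm} then supplies $L_1\in\cdis{B(\mathcal{H})}$ and $L_2=\chi\in\cocpe{B(\mathcal{H})}$ with $L_1(\unit)+L_2(\unit)=0$, both weakly-* continuous, whence Theorem \ref{thm:WstarDecomposition}(\ref{thm:WstarDecompositionItem2}) gives that $(e^{tL})_{t\geqslant 0}$ is a decomposable dynamical semigroup.

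The main obstacle I anticipate is the bookkeeping in the coCP term: one must use that $\transpose$ reverses products (antihomomorphism) while still preserving adjoints (*-map), and verify that the reindexed operators $W_n=\transpose(\tilde W_n^\hadj)$ simultaneously deliver the correctly sandwiched transpose $W_n^\hadj A^\transpose W_n$ and a weakly-* convergent $\sum_n W_n^\hadj W_n$. Everything else is assembly of previously proved statements.
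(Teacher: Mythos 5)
Your proposal is correct and follows exactly the route the paper intends: the paper states this theorem as an immediate consequence of Theorems \ref{thm:WstarDecomposition}, \ref{thm:WstarStandardForm} and \ref{thm:CstarStandardForm} combined with the Kraus lemma, and your argument is precisely that assembly, with the transposition bookkeeping (setting $W_n = \transpose(\tilde W_n^\hadj)$ so that $(\transpose\circ\psi)(A) = \sum_n W_n^\hadj A^\transpose W_n$ and $\sum_n W_n^\hadj W_n = \transpose\bigl(\sum_n \tilde W_n^\hadj \tilde W_n\bigr)$ converges weakly-*) worked out correctly via the antihomomorphism and weak-* continuity properties of $\transpose$.
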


\subsection{Nonunital case}
\label{sec:NonunitalSemigroups}

Here we note on a slight generalization of previous results by not restricting ourselves to unital maps, i.e.~we allow $e^{tL}(\unit)\neq \unit$. Similarly to \cite{Lindblad1976} this is achieved by perturbing a generator of unital semigroup and provides an extension of \cite[Corollary 2]{Lindblad1976}.

\begin{theorem}\label{thm:CstarDecompositionNonunital}
    Let $\mathscr{A}$ be a C*-algebra. The following statements hold:
    \begin{enumerate}
        \item\label{thm:CstarDecompositionNonunitalItem1} If $(e^{tL})_{t\geqslant 0}$ is a smoothly decomposable semigroup on $\mathscr{A}$ then $L = L_1 + L_2$ such that $L_2\in\cocpe{\mathscr{A}}$ and $L_{1,n} = \id{n}\otimes L_1$ is a *-map which satisfies inequality
        \begin{equation}\label{eq:NonUnitalLineq}
            L_{1,n}(X^\hadj X) - L_{1,n}(X^\hadj)X - X^\hadj L_{1,n}(X) + X^\hadj L_{1,n}(\unit) X \geqslant 0
        \end{equation}
        for every $X\in\matra{n}{\mathscr{A}}$ and every $n\in\naturals$.
        \item\label{thm:CstarDecompositionNonunitalItem2} If a *-map $L_1\in B(\mathscr{A})$ satisfies \eqref{eq:NonUnitalLineq} for all $X\in\matra{n}{\mathscr{A}}$, $n\in\naturals$ and $L_2\in\cocpe{\mathscr{A}}$ then $e^{tL}\in\dece{\mathscr{A}}$ for $L = L_1+L_2$.
    \end{enumerate}
\end{theorem}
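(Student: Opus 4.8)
The plan is to obtain part \ref{thm:CstarDecompositionNonunitalItem1} by running the argument of part \ref{thm:CstarDecompositionItem1} of Theorem \ref{thm:CstarDecomposition} with one modification, and part \ref{thm:CstarDecompositionNonunitalItem2} by the Trotter-product scheme of its part \ref{thm:CstarDecompositionItem2}; the one genuinely new feature throughout is that unitality of the semigroup --- hence subunitality of the completely positive block --- is no longer at our disposal, so the clean subunital Kadison--Schwarz bound \eqref{eq:KSineq} must be replaced by the full two-positivity of the block.

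For part \ref{thm:CstarDecompositionNonunitalItem1}, I would begin exactly as before: smooth decomposability furnishes $e^{tL} = \phi_t + \transpose\circ\psi_t$ with $\phi_t,\psi_t\in\cpe{\mathscr{A}}$, $\phi_0 = \id{}$, $\psi_0 = 0$, and uniform right derivatives $R,S$ at $t=0$, so that $L = R + \transpose\circ S$. The coCP block is untouched: $(\tfrac{1}{t}\psi_t)$ is a net of CP maps converging in norm to $S$, whence $S\in\cpe{\mathscr{A}}$ and $L_2 := \transpose\circ S\in\cocpe{\mathscr{A}}$. Only the treatment of $L_1 := R$ changes. Since $\phi_{n,t} = \id{n}\otimes\phi_t$ is no longer subunital, I would instead use its two-positivity in operator-matrix form,
\[
\begin{pmatrix} \phi_{n,t}(X^\hadj X) & \phi_{n,t}(X^\hadj) \\ \phi_{n,t}(X) & \phi_{n,t}(\unit) \end{pmatrix} \geqslant 0, \qquad X\in\matra{n}{\mathscr{A}},
\]
and compress it between the row $\begin{pmatrix} \unit & -X^\hadj \end{pmatrix}$ and its adjoint to get
\[
g_X(t) := \phi_{n,t}(X^\hadj X) - \phi_{n,t}(X^\hadj)X - X^\hadj\phi_{n,t}(X) + X^\hadj\phi_{n,t}(\unit)X \geqslant 0 .
\]
Because $\phi_{n,0} = \id{}$ gives $g_X(0) = 0$, differentiating at $t=0$ (smoothness of $\phi_t$, plus passage of $\tfrac{1}{t}g_X(t)\geqslant 0$ to the limit through the norm-closed positive cone) reproduces precisely the left-hand side of \eqref{eq:NonUnitalLineq} with $L_1 = R$; that $L_1$ is a *-map is verified as in Theorem \ref{thm:CstarDecomposition}.

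For part \ref{thm:CstarDecompositionNonunitalItem2}, the strategy is first to show that the hypothesis on $L_1$ is exactly what renders $e^{tL_1}$ completely positive, then to conclude by a Trotter product. Setting $b = L_1(\unit)$ (self-adjoint, as $L_1$ is a *-map) and $G(a) = L_1(a) - \tfrac{1}{2}\acomm{b}{a}$, one has $G(\unit) = 0$, and the same elementary algebra as in Lemma \ref{prop:SubunitalCPsem} shows that $D_{G_n}(X)$ equals the left-hand side of \eqref{eq:NonUnitalLineq}; thus the hypothesis says precisely $G\in\cdis{\mathscr{A}}$ with $G(\unit) = 0$, i.e.\ $G$ is completely dissipative in Lindblad's sense, so $e^{tG}$ is CP and unital by \cite[Theorem 1]{Lindblad1976}. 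The remainder $K(a) = \tfrac{1}{2}\acomm{b}{a}$ generates the conjugation semigroup $e^{tK}(a) = e^{\frac{1}{2}tb}\,a\,e^{\frac{1}{2}tb}$, which is CP for any self-adjoint $b$ --- here, unlike in Lemma \ref{prop:SubunitalCPsem}, no sign condition on $b$ is needed --- and the Lie--Trotter formula then displays $e^{tL_1} = \lim_n (e^{\frac{t}{n}G}e^{\frac{t}{n}K})^n$ as a norm limit of CP maps, hence CP. Finally, writing $L_2 = \transpose\circ\psi$ with $\psi\in\cpe{\mathscr{A}}$, Lemma \ref{lemma:ExpcoCP} gives $e^{\frac{t}{n}L_2}\in\dece{\mathscr{A}}$; since each $e^{\frac{t}{n}L_1}$ is CP (hence decomposable) and $\dece{\mathscr{A}}$ is closed under composition (from $\mathsf{CP}\circ\mathsf{CP}=\mathsf{CP}$, $\mathsf{CP}\circ\mathsf{coCP}=\mathsf{coCP}=\mathsf{coCP}\circ\mathsf{CP}$, $\mathsf{coCP}\circ\mathsf{coCP}=\mathsf{CP}$, consequences of $\transpose^2=\id{}$ and the intertwining $\transpose\circ\psi=\psi^\prime\circ\transpose$ of Lemma \ref{lemma:ExpcoCP}), every factor $(e^{\frac{t}{n}L_1}e^{\frac{t}{n}L_2})^n$ of the Trotter product \eqref{eq:TrotterDec} is decomposable, and so is its norm limit $e^{tL}$ by norm-closedness of $\dece{\mathscr{A}}$.

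The step I expect to be the main obstacle is the operator-matrix computation in part \ref{thm:CstarDecompositionNonunitalItem1}: in the unital setting the bound \eqref{eq:KSineq} produced the dissipation function directly, whereas here the whole content is that dropping subunitality forces the compensating term $X^\hadj L_{1,n}(\unit)X$ into the inequality, and one must check that it is exactly this term --- supplied by the diagonal entry $\phi_{n,t}(\unit)$ of the two-positive matrix --- that survives differentiation at $t=0$, yielding \eqref{eq:NonUnitalLineq} rather than some other rearrangement. For part \ref{thm:CstarDecompositionNonunitalItem2} the only point beyond routine manipulation is confirming that $\dece{\mathscr{A}}$ forms a semigroup under composition, so that the powers appearing inside the Trotter product remain decomposable.
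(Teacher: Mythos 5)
Your proposal is correct, and part \ref{thm:CstarDecompositionNonunitalItem2} follows the paper's proof essentially verbatim: same map $G(a) = L_1(a) - \tfrac{1}{2}\acomm{L_1(\unit)}{a}$, same observation that $D_{G_n}(X)$ is the left-hand side of \eqref{eq:NonUnitalLineq} so that Lindblad's Theorem 1 applies, same conjugation semigroup and double use of Lie--Trotter. Part \ref{thm:CstarDecompositionNonunitalItem1}, however, takes a genuinely different route at the key step. The paper defines
\begin{equation*}
    f_X(t) = \phi_{n,t}(X^\hadj X) - \phi_{n,t}(X)^\hadj\,\phi_{n,t}(\unit)^{-1}\phi_{n,t}(X)
\end{equation*}
and invokes the generalized Kadison--Schwarz inequality \eqref{eq:KSineqNonunital}, i.e.\ exactly the Schur complement of your $2\times 2$ operator matrix, whereas you compress that matrix by the row $\begin{pmatrix}\unit & -X^\hadj\end{pmatrix}$. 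Both functions vanish at $t=0$ and have the same derivative there (for the paper's $f_X$, differentiating $\phi_{n,t}(\unit)^{-1}$ at $t=0$ contributes $-R_n(\unit)$, which is what produces the compensating term $X^\hadj R_n(\unit)X$), so both deliver \eqref{eq:NonUnitalLineq}. Your variant buys some robustness: $g_X(t)$ is affine in $\phi_{n,t}$, hence trivially differentiable, and needs only $2$-positivity of $\phi_{n,t}$ with no invertibility of $\phi_{n,t}(\unit)$ --- a hypothesis the paper's $f_X$ silently requires and which is guaranteed only for small $t$ (by smoothness and $\phi_{n,0}(\unit)=\unit$), though small $t$ is all that is needed for the derivative at $0$. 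The paper's form is the pointwise sharper inequality (the Schur complement dominates the compression), but that sharpness is never used. Finally, your explicit verification that $\dece{\mathscr{A}}$ is closed under composition, via the intertwining $\transpose\circ\psi = \psi^\prime\circ\transpose$ and $\transpose^2=\id{}$, fills in a step the paper's closing Trotter argument uses implicitly, and it is correct.
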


\begin{proof}
Ad \ref{thm:CstarDecompositionNonunitalItem1}. Let again $e^{tL} = \phi_t + \transpose\circ\psi_t$ for $\phi_t\in\cpe{\mathscr{A}}$, $\psi_t\in\cocpe{\mathscr{A}}$ smooth in $t$, $\phi_0 = \id{}$, $\psi_0 = 0$ and denote again $L_1 = \left.\frac{d\phi_t}{dt}\right|_0$, $L_2 = \left.\frac{d\psi_t}{dt}\right|_0$, both *-maps on $\mathscr{A}$. Then $L_2$ is coCP by closedness argument like in a proof of Theorem \ref{thm:CstarDecomposition}. For any $n\in\naturals$, take $X\in\matra{n}{\mathscr{A}}$ and define
\begin{equation}\label{eq:fXnonunitalDef}
    f_X(t) = \phi_{n,t}(X^\hadj X) - \phi_{n,t}(X)^\hadj \phi_{n,t}(\unit)^{-1}\phi_{n,t}(X),
\end{equation}
where $\phi_{n,t} = \id{n}\otimes\phi_t$. Map $\phi_t$, being nonunital in principle, is subject to a generalized Kadison-Schwarz inequality
\begin{equation}\label{eq:KSineqNonunital}
    \phi_{t}(a^\hadj a) \geqslant \phi_{t}(a)^\hadj \phi_{t}(\unit)^{-1}\phi_{t}(X),
\end{equation}
which then extend to $\phi_{n,t}$ for all $n$, hence $f_X(t) \geqslant 0$. Differentiating $f_X$ like in Theorem \ref{thm:CstarDecomposition} we obtain $\frac{df_X}{dt}(0)$ is given by the left hand side of \eqref{eq:NonUnitalLineq}. Again we have $f_X(0) = 0$ so $\frac{1}{t}\left( f_X(t) - f_X (0)\right)$ is positive for all $t\geqslant 0$ by \eqref{eq:KSineqNonunital} and remains positive in the limit $t\to 0^+$. In consequence $\frac{df_X}{dt}(0) \geqslant 0$ for all $X$, $n$ which is the inequality \eqref{eq:NonUnitalLineq}.

Ad \ref{thm:CstarDecompositionNonunitalItem2}. Let $L_1$, $L_2$ be as assumed. Set $G : \mathscr{A}\to \mathscr{A}$ by
\begin{equation}
    G(a) = L_1(a) - \frac{1}{2}\acomm{L_1(\unit)}{a}
\end{equation}
and extend it to $G_n = \id{n}\otimes G$, $n\in\naturals$. Then, one can check with direct algebra that $G(\unit) = 0$ and $D_{G_n}(X)$ is again the left hand side of \eqref{eq:NonUnitalLineq}; hence $D_{G_n}(X)\geqslant 0$. In result, $G$ is completely dissipative in the sense of \cite{Lindblad1976} and generates a semigroup of CP unital maps on $\mathscr{A}$. Map $\frac{1}{2}\acomm{L_{1} (\unit)}{\cdot}$ is then easily shown to generate a semigroup $a \mapsto e^{\frac{1}{2}tL_1(\unit)} a e^{\frac{1}{2}tL_1(\unit)}$ which is CP since $L_1(\unit)$ is self-adjoint. Using Lie-Trotter formula,
\begin{equation}
    e^{tL_1} = \lim_{n\to\infty} \left(e^{\frac{t}{n}G}e^{\frac{t}{2n}\acomm{L_1 (\unit)}{\cdot}}\right)^n
\end{equation}
is CP as a uniform limit of CP maps in the closed cone $\cpe{\mathscr{A}}$. Employing the Lie-Trotter formula again, this time to $e^{t(L_1 + L_2)}$, yields it to be decomposable since $e^{tL_2}\in\dece{\mathscr{A}}$ by Lemma \ref{lemma:ExpcoCP}. The proof is concluded.
\end{proof}

\begin{theorem}\label{thm:NonunitalStandardForm}
    The following statements hold:
    \begin{enumerate}
        \item\label{thm:NonunitalStandardFormItem1} Let $\mathscr{A}$ be a C*-algebra. If $\varphi\in\dece{\mathscr{A}}$, $K\in\mathscr{A}$ and $L\in B(\mathscr{A})$ is a *-map defined via
        \begin{equation}\label{eq:LnonunitalStandardForm}
            L(a) = \varphi(a) + Ka + aK^\hadj
        \end{equation}
        then $e^{tL}\in\dece{\mathscr{A}}$.
        \item\label{thm:NonunitalStandardFormItem2} Let $\mathscr{A}$ be a von Neumann algebra. If $(e^{tL})_{t\geqslant 0}$ is smoothly decomposable semigroup of weak-* continuous maps on $\mathscr{A}$ then there exists $\varphi\in\dece{\mathscr{A}}_\sigma$ such that $L$ is of a form \eqref{eq:LnonunitalStandardForm}.
    \end{enumerate}
\end{theorem}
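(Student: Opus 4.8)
The plan is to handle the two parts separately, reducing each to the nonunital machinery already built in Theorem \ref{thm:CstarDecompositionNonunital} together with Lindblad's structural result \cite[Proposition 5]{Lindblad1976}.

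For part \ref{thm:NonunitalStandardFormItem1}, I would begin from a decomposition $\varphi = \phi + \transpose\circ\psi$ with $\phi,\psi\in\cpe{\mathscr{A}}$ and split $L = L_1 + L_2$, where $L_1(a) = \phi(a) + Ka + aK^\hadj$ and $L_2 = \transpose\circ\psi\in\cocpe{\mathscr{A}}$. Since $L_2$ is coCP, by part \ref{thm:CstarDecompositionNonunitalItem2} of Theorem \ref{thm:CstarDecompositionNonunital} it suffices to verify that the \mbox{*-map} $L_1$ satisfies inequality \eqref{eq:NonUnitalLineq} for every $n$. Computing the left-hand side of \eqref{eq:NonUnitalLineq} for $L_{1,n} = \id{n}\otimes L_1$, the two terms carrying $K$ cancel against the contribution of $X^\hadj L_{1,n}(\unit)X$, leaving precisely $\phi_n(X^\hadj X) - \phi_n(X^\hadj)X - X^\hadj\phi_n(X) + X^\hadj\phi_n(\unit)X$ with $\phi_n = \id{n}\otimes\phi$. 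Inserting the Stinespring dilation $\phi_n(\cdot) = V^\hadj\pi(\cdot)V$ for a \mbox{*-homomorphism} $\pi$ rewrites this as $(\pi(X)V - VX)^\hadj(\pi(X)V - VX) \geqslant 0$, exactly as in the proof of Theorem \ref{thm:CstarStandardForm}. Part \ref{thm:CstarDecompositionNonunitalItem2} of Theorem \ref{thm:CstarDecompositionNonunital} then delivers $e^{tL}\in\dece{\mathscr{A}}$.

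For part \ref{thm:NonunitalStandardFormItem2}, I would apply part \ref{thm:CstarDecompositionNonunitalItem1} of Theorem \ref{thm:CstarDecompositionNonunital} to write $L = L_1 + L_2$ with $L_2\in\cocpe{\mathscr{A}}$ and $L_1$ a \mbox{*-map} obeying \eqref{eq:NonUnitalLineq}; weak-* continuity of $L_1$ and $L_2$ is inherited exactly as in the proof of Theorem \ref{thm:WstarDecomposition}, placing $L_2\in\cocpe{\mathscr{A}}_\sigma$. The key step is to linearize away the nonunitality of $L_1$: setting $G(a) = L_1(a) - \frac{1}{2}\acomm{L_1(\unit)}{a}$, a direct computation gives $G(\unit)=0$ and identifies $D_{G_n}(X)$ with the left-hand side of \eqref{eq:NonUnitalLineq}, so $G$ is completely dissipative in the sense of Lindblad and weak-* continuous. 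Applying \cite[Proposition 5]{Lindblad1976} to $G$ furnishes $\phi\in\cpe{\mathscr{A}}_\sigma$ and a self-adjoint $H\in\mathscr{A}$ with $G(a) = i\comm{H}{a} + \phi(a) - \frac{1}{2}\acomm{\phi(\unit)}{a}$. Back-substituting and grouping the commutator and anticommutator contributions into $Ka + aK^\hadj$ with $K = iH + \frac{1}{2}(L_1(\unit) - \phi(\unit))$ (well defined since $L_1(\unit)$ and $\phi(\unit)$ are self-adjoint) yields $L_1(a) = \phi(a) + Ka + aK^\hadj$. Writing $L_2 = \transpose\circ\psi$ with $\psi\in\cpe{\mathscr{A}}_\sigma$ and setting $\varphi = \phi + \transpose\circ\psi\in\dece{\mathscr{A}}_\sigma$ then puts $L$ into the claimed form \eqref{eq:LnonunitalStandardForm}.

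I expect the main obstacle to lie in part \ref{thm:NonunitalStandardFormItem2}, specifically in the continuity bookkeeping: one must confirm that the decomposition from Theorem \ref{thm:CstarDecompositionNonunital} can be taken with normal $L_1, L_2$, that $G$ inherits weak-* continuity, and that the CP map $\phi$ extracted via \cite[Proposition 5]{Lindblad1976} is genuinely normal so that $\varphi\in\dece{\mathscr{A}}_\sigma$. The algebraic identities (the dissipation cancellation in part \ref{thm:NonunitalStandardFormItem1} and the $G$-reduction in part \ref{thm:NonunitalStandardFormItem2}) are routine; the conceptual care is in tracking normality and in correctly absorbing the Hamiltonian and anticommutator terms into the single operator $K$.
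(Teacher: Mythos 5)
Your proof is correct, but it routes both parts differently from the paper's own argument. For part \ref{thm:NonunitalStandardFormItem1} the paper never touches inequality \eqref{eq:NonUnitalLineq}: it sets $\Psi(a)=\phi(a)+Ka+aK^\hadj$, quotes Lindblad's Theorem 3 to conclude $e^{t\Psi}$ is CP, and obtains $e^{tL}\in\dece{\mathscr{A}}$ by the Lie--Trotter formula combined with $e^{t\,\transpose\circ\psi}\in\dece{\mathscr{A}}$ from Lemma \ref{lemma:ExpcoCP}. You instead verify by hand --- via the cancellation of the $K$-terms against $X^\hadj L_{1,n}(\unit)X$ and a Stinespring dilation, mirroring Theorem \ref{thm:CstarStandardForm} --- that $L_1(a)=\phi(a)+Ka+aK^\hadj$ satisfies \eqref{eq:NonUnitalLineq}, then invoke part \ref{thm:CstarDecompositionNonunitalItem2} of Theorem \ref{thm:CstarDecompositionNonunital}; this is sound ($L_1$ is indeed a *-map, as required there, and your cancellation is easily checked), and it buys internal economy, since the Trotter and Lemma \ref{lemma:ExpcoCP} mechanics are already packaged inside Theorem \ref{thm:CstarDecompositionNonunital} and no fresh appeal to Lindblad's Theorem 3 is needed, whereas the paper's route is shorter on the page at the cost of that extra citation. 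One pedantic point: for the identification $X^\hadj V_{n}^\hadj V_{n}X = X^\hadj\phi_n(\unit)X$ you need the Stinespring triple normalized so that $V^\hadj V=\phi(\unit)$ (take $\pi$ unital, or replace $V$ by $\pi(\unit)V$), the same convention the paper uses silently in Theorem \ref{thm:CstarStandardForm}. For part \ref{thm:NonunitalStandardFormItem2} your skeleton coincides with the paper's --- shift $L_1$ by $\frac{1}{2}\acomm{L_1(\unit)}{\cdot}$ to obtain a unital, weakly-* continuous, completely dissipative map --- but at the structural step the paper quotes Lindblad's Theorem 3 to get $M(a)=\phi(a)+ka+ak^\hadj$ directly and sets $K=k+\frac{1}{2}L_1(\unit)$, while you quote Proposition 5 and absorb the Hamiltonian and anticommutator terms into the explicit $K=iH+\frac{1}{2}\left(L_1(\unit)-\phi(\unit)\right)$; the two are equivalent (your algebra is right, since $H$, $\phi(\unit)$ and $L_1(\unit)$ are all self-adjoint), and your version has the mild advantage of exhibiting $K$ concretely rather than through the $k$ furnished by Theorem 3. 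Finally, the normality bookkeeping you flag as the main obstacle is genuinely glossed over, but by the paper as well: the assertion that $L_1, L_2$ land in $B(\mathscr{A})_\sigma$ rests on the components $\phi_t,\psi_t$ of the smooth decomposition being individually weakly-* continuous, which Theorems \ref{thm:WstarDecomposition} and \ref{thm:NonunitalStandardForm} also pass over without comment, so on this point you match, rather than fall below, the paper's level of rigor.
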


\begin{proof}
Ad \ref{thm:NonunitalStandardFormItem1}. Take $K\in\mathscr{A}$ and $\varphi = \phi + \transpose\circ\psi$ for some $\phi,\psi \in \cocpe{\mathscr{A}}$. Denote
\begin{equation}
    \Psi(a) = \phi(a) + Ka + aK^\hadj.
\end{equation}
Then $e^{t\Psi}$ is CP by \cite[Theorem 3]{Lindblad1976}. With Lie-Trotter formula,
\begin{equation}
    e^{tL} = \lim_{n\to\infty}\left( e^{\frac{t}{n}\Psi} e^{\frac{t}{n} \transpose\circ\psi} \right)^{n}
\end{equation}
is then decomposable since $e^{t\,\transpose\circ\psi}\in\dece{\mathscr{A}}$ via Lemma \ref{lemma:ExpcoCP}.

Ad \ref{thm:NonunitalStandardFormItem2}. Part \ref{thm:CstarDecompositionNonunitalItem1} of Theorem \ref{thm:CstarDecompositionNonunital} yields $L = L_1 + L_2$ for $L_2\in\cocpe{\mathscr{A}}_\sigma$ and $L_1 \in B(\mathscr{A})_\sigma$ satisfies inequality \eqref{eq:NonUnitalLineq} for all $X\in\matra{n}{\mathscr{A}}$, $n\in\naturals$. Define
\begin{equation}
    M(a) = L_1 (a) - \frac{1}{2}\acomm{L_1 (\unit)}{a}, \quad a\in\mathscr{A}
\end{equation}
and $M_n = \id{n}\otimes M$. Then $M(\unit) = 0$ and $D_{M_n}(X) = D_{L_{1,n}}(X) + X^\hadj L_1(\unit)X \geqslant 0$, i.e.~$M$ is completely dissipative in the sense of \cite{Lindblad1976} and it generates a semigroup $(e^{tM}_{t\geqslant 0})$ of CP unital maps. By \cite[Theorem 3]{Lindblad1976} it then admits a form
\begin{equation}
    M(a) = \phi(a) + ka + ak^\hadj
\end{equation}
for some $k\in\mathscr{A}$ and $\phi\in\cpe{\mathscr{A}}_\sigma$. As $L_2 = \transpose\circ\psi$ for $\psi\in\cpe{\mathscr{A}}_\sigma$, we have
\begin{align}
    L(a) &= M(a) + L_2 (a) + \frac{1}{2}\acomm{L_1(\unit)}{a}\\
    &= \phi(a) + \psi(a)^\transpose + \left(k + \frac{1}{2}L_1(\unit)\right)a + a \left(k + \frac{1}{2}L_1(\unit)\right)^\hadj\nonumber
\end{align}
is of a form \eqref{eq:LnonunitalStandardForm} for $\varphi = \phi + \transpose\circ\psi$ weakly-* continuous and $K = k + \frac{1}{2}L_1(\unit)$.
\end{proof}

\section{D-divisible quantum dynamics}
\label{sec:DdivQDM}

In this section we assume $\mathscr{A} = B(\mathcal{H})$ (with $\unit$ being the identity operator). Its predual is then $B_1 (\mathcal{H})$, the Banach space of all trace class operators on $\mathcal{H}$ complete with the trace norm $\|\cdot\|_1$.

A family $(\Lambda_t)_{t\geqslant 0}$ of bounded maps on $B_1 (\mathcal{H})$ will be called a \emph{(quantum) dynamical map} if $\Lambda_t$ is positive and trace preserving for all $t\geqslant 0$ and $\Lambda_t (\rho) \to \rho$ in trace norm for every $\rho\in B_1(H)$ as $t\to 0^+$. These conditions are equivalent to the dual $\Lambda_{t}^{\prime}$ being a positive, unital, weakly-* continuous map on $B(\mathcal{H})$, satisfying $\Lambda_{t}^{\prime}(A)\to A$ in weak-* operator topology for all $A\in B(\mathcal{H})$ as $t\to 0^+$. Traditionally, when we set $\rho_t = \Lambda_t (\rho)$ for some $\rho\geqslant 0$, $\tr{\rho}=1$, we interpret $\rho_t$ as a time-dependent \emph{density operator} of a given physical system, and so $\Lambda_t$ encodes a physical evolution of a mixed state. We restrict attention exclusively to dynamical maps which are subject to a Master Equation
\begin{equation}\label{eq:ME}
    \frac{d\rho_t}{dt} = \mathcal{L}_t (\rho_t) 
\end{equation}
or, equivalently $\frac{d\Lambda_t}{dt} = \mathcal{L}_t \circ\Lambda_t$, for a time-dependent *-map $\mathcal{L}_t \in B(B_1(\mathcal{H}))$ satisfying $\tr{\mathcal{L}_t(\rho)}=0$ for all $\rho\in B_1(\mathcal{H})$. Its dual $\mathcal{L}_t \in B(B(\mathcal{H}))$ is then a weakly-* continuous map satisfying $L_t (\unit)=0$, i.e.~is to be understood as in the previous sections. Clearly, both $L_t$ and $\mathcal{L}_t$ are referred to as generator, in the Heisenberg and Schr\"{o}dinger picture, respectively. A dynamical map subject to \eqref{eq:ME} is \emph{divisible}, i.e.~there exists a two-parameter family $(V_{t,s})_{t\geqslant s}$ of \emph{propagators}, also trace preserving, such that $\Lambda_t = V_{t,s}\circ\Lambda_s$ for all $s \in [0,t]$. When $V_{t,s}$ is always an $n$-positive map we say $(\Lambda_t)_{t\geqslant 0}$ is \emph{$n$-divisible}. In the most prominent case when $V_{t,s} \in \cpe{B_1(\mathcal{H})}$ we call the dynamical map \emph{CP-divisible} or \emph{Markovian}.

The structure of $\mathcal{L}_t$ for CP-divisible invertible dynamical maps has been long known to be dual to the celebrated GKSL form \eqref{eq:StandardForm}. When the constraint of CP-divisibility is alleviated by requiring $V_{t,s}$ to be decomposable instead, we call the dynamical map \emph{D-divisible}. In case of finite-dimensional Hilbert space, i.e.~when $B_1(\mathcal{H}) \simeq \matrd$, it was shown that $\mathcal{L}_t$ retains its general Lindblad-like structure, however with $\phi$ appearing in \eqref{eq:StandardForm} being decomposable \cite{Szczygielski2023}. The following theorem extends results of \cite{Szczygielski2023} onto the case of a general Hilbert space.

\begin{theorem}\label{thm:DynMapsStandardForm}
Let $(\Lambda_t)_{t\geqslant 0}$ be a dynamical map on $B_1(\mathcal{H})$ satisfying time-local Master Equation \eqref{eq:ME} and $\Lambda_0 = \id{}$. Then:
\begin{enumerate}
    \item\label{thm:DynMapsStandardFormItem1} If $V_{t,s}$ is smoothly decomposable for all $t\geqslant s$ then there exists a self-adjoint $H_t\in B(\mathcal{H})$ and $\Phi_t \in \dece{B_1(\mathcal{H})}$ such that
    \begin{equation}\label{eq:LtDdiv}
        \mathcal{L}_t (\rho) = -i\comm{H_t}{\rho} + \Phi_t (\rho) - \frac{1}{2}\acomm{\Phi_{t}^{\prime}(\unit)}{\rho}
    \end{equation}
    for all $\rho\in B_1(\mathcal{H})$.
    \item\label{thm:DynMapsStandardFormItem2} Conversely, when $\mathcal{L}_t$ admits a form \ref{eq:LtDdiv} then $V_{t,s}\in\dece{B_1(\mathcal{H})}$ for all $t\geqslant s$.
\end{enumerate}
\end{theorem}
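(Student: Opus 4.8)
The plan is to transport the problem from the predual $B_1(\mathcal{H})$ into the Heisenberg picture on $B(\mathcal{H})$, where Theorems \ref{thm:CstarDecomposition}--\ref{thm:CstarStandardForm} become available, and to exploit the fact that the structural arguments are local (first order) in the time variable. The starting observation is that the Master Equation \eqref{eq:ME} together with divisibility forces the propagator to obey $\partial_t V_{t,s} = \mathcal{L}_t \circ V_{t,s}$ with $V_{s,s} = \id{}$, so that $\partial_t V_{t,s}\big|_{t=s} = \mathcal{L}_s$. Dualizing, the Heisenberg propagators $V_{t,s}^{\prime}$ are weakly-* continuous, unital (by trace preservation of $V_{t,s}$), and satisfy $V_{s,s}^{\prime} = \id{}$ and $\partial_t V_{t,s}^{\prime}\big|_{t=s} = L_s$, where $L_s = \mathcal{L}_s^{\prime}$. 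Throughout I use that passing to preduals intertwines the Schr\"{o}dinger and Heisenberg pictures while preserving each of $\cpe{\cdot}$, $\cocpe{\cdot}$ and $\dece{\cdot}$ (since $\transpose$ is self-dual under the trace pairing and the adjoint preserves complete positivity), so that decomposability of a map on $B_1(\mathcal{H})$ is equivalent to decomposability of its dual on $B(\mathcal{H})$.

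For part \ref{thm:DynMapsStandardFormItem1}, I would fix $s$ and regard $(V_{t,s}^{\prime})_{t\geqslant s}$ as a smoothly decomposable family passing through the identity at $t=s$. The proof of part \ref{thm:CstarDecompositionItem1} of Theorem \ref{thm:CstarDecomposition} never uses the semigroup law: it extracts the generator purely from the first-order behaviour at the base point, through subunitality of the completely positive part and the Kadison--Schwarz inequality \eqref{eq:KSineq}. Applying that argument verbatim at $t=s$ to the decomposition $V_{t,s}^{\prime} = \phi_{t} + \transpose\circ\psi_{t}$ (with $\phi_s = \id{}$, $\psi_s = 0$) yields $L_s = L_{1,s} + L_{2,s}$ with $L_{1,s}\in\cdis{B(\mathcal{H})}_\sigma$, $L_{2,s}\in\cocpe{B(\mathcal{H})}_\sigma$ and $L_s(\unit) = 0$, hence $L_{1,s}(\unit)+L_{2,s}(\unit)=0$. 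Theorem \ref{thm:WstarStandardForm} then puts $L_s$ into the form $L_s(a) = i\comm{H_s}{a} + \varphi_s(a) - \tfrac{1}{2}\acomm{\varphi_s(\unit)}{a}$ for a self-adjoint $H_s$ and $\varphi_s\in\dece{B(\mathcal{H})}_\sigma$. Taking preduals, the commutator superoperator changes sign, the anticommutator superoperator is self-dual, and $\varphi_s = \Phi_s^{\prime}$ with $\Phi_s\in\dece{B_1(\mathcal{H})}$; this is precisely \eqref{eq:LtDdiv}.

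For the converse, part \ref{thm:DynMapsStandardFormItem2}, dualizing \eqref{eq:LtDdiv} recovers $L_t$ in the standard form \eqref{eq:StandardForm3}, so Theorem \ref{thm:CstarStandardForm} furnishes for each $t$ a splitting $L_t = L_{1,t} + L_{2,t}$ with $L_{1,t}\in\cdis{B(\mathcal{H})}$, $L_{2,t}\in\cocpe{B(\mathcal{H})}$ and $L_{1,t}(\unit)+L_{2,t}(\unit)=0$. By part \ref{thm:CstarDecompositionItem2} of Theorem \ref{thm:CstarDecomposition} (equivalently its weakly-* continuous version, Theorem \ref{thm:WstarDecomposition}), each frozen exponential $e^{\Delta L_{\tau}}$ is then decomposable, and so is its predual $e^{\Delta\mathcal{L}_{\tau}}$. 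The key algebraic fact I would isolate is that $\dece{B_1(\mathcal{H})}$ is closed under composition: expanding $(\phi_1 + \transpose\circ\psi_1)\circ(\phi_2 + \transpose\circ\psi_2)$ and using $\transpose\circ\psi = \psi^{\prime}\circ\transpose$ (as in Lemma \ref{lemma:ExpcoCP}) together with $\transpose^2 = \id{}$, the four terms regroup into two completely positive and two completely copositive contributions, so the composite is again decomposable. Representing the nonautonomous propagator as a time-ordered product integral $V_{t,s} = \lim_{N\to\infty}\prod_{k} e^{\Delta_k \mathcal{L}_{\tau_k}}$ over partitions of $[s,t]$ with vanishing mesh, each partial product is decomposable by the composition law, and decomposability survives the limit because $\dece{B_1(\mathcal{H})}$ is norm closed.

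The main obstacle is the final step of the converse: establishing that the nonautonomous propagator is genuinely the norm limit of such time-ordered products. This requires enough regularity of $t\mapsto\mathcal{L}_t$ (norm continuity is amply sufficient) to invoke product-integral/chronological-exponential convergence, after which composition-closedness and norm-closedness of $\dece{B_1(\mathcal{H})}$ complete the argument. A secondary point, needed to make $\partial_t V_{t,s}\big|_{t=s}=\mathcal{L}_s$ rigorous in part \ref{thm:DynMapsStandardFormItem1}, is the invertibility of $\Lambda_s$ (guaranteed for $s$ near $0$ by $\Lambda_0 = \id{}$ and continuity, and otherwise part of the standing divisibility hypothesis), which legitimizes differentiating the propagator in its first argument.
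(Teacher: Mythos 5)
Your proposal is correct and follows essentially the same route as the paper: part \ref{thm:DynMapsStandardFormItem1} by differentiating the smooth decomposition of $V_{t,s}^{\prime}$ at the diagonal $t=s$ (re-using the Kadison--Schwarz argument of Theorem \ref{thm:CstarDecomposition}, which indeed needs no semigroup law) and then invoking Theorem \ref{thm:WstarStandardForm} before dualizing, and part \ref{thm:DynMapsStandardFormItem2} by freezing the generator, applying Theorems \ref{thm:WstarDecomposition} and \ref{thm:CstarStandardForm} to each $e^{\Delta\mathcal{L}_\tau}$, and passing to the time-splitting limit using norm-closedness of $\dece{B_1(\mathcal{H})}$. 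Your only additions are to make explicit two points the paper leaves implicit --- closedness of $\dece{B_1(\mathcal{H})}$ under composition via $\transpose\circ\psi=\psi^{\prime}\circ\transpose$, and the convergence of the time-ordered product, which the paper settles by citing the time-splitting formula of \cite{Rivas2012} --- so these are refinements of detail, not a different argument.
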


\begin{proof}
For part \ref{thm:DynMapsStandardFormItem1}, let $V_{t,s}$ be decomposable, $V_{t,s} = Z_{t,s} + \transpose\circ Y_{t,s}$, for $Z_{t,s},Y_{t,s}\in\cpe{B_1(\mathcal{H})}$ smooth in $t\geqslant s\geqslant 0$. Again, we have $V_{t,t} = \id{}$ so $Z_{t,t} = \id{}$ and $Y_{t,t}=0$. Then, for every $t$ there exist maps $\mathcal{R}_t, \mathcal{S}_t \in B(B_1 (\mathcal{H}))$ satisfying
\begin{equation}
    \lim_{h\to 0^+}\left\| \mathcal{R}_t - \frac{1}{h}\left(Z_{t+h,t}-\id{}\right) \right\| = 0, \quad \lim_{h\to 0^+}\left\| \mathcal{S}_t - \frac{1}{h}Y_{t+h,t} \right\| = 0,
\end{equation}
i.e.~uniform derivatives of $Z_{t,s}$ and $Y_{t,s}$ at $t=s$. This however shows
\begin{equation}
    \lim_{h\to 0^+}\left\| \mathcal{R}_t + \transpose \circ \mathcal{S}_t - \frac{1}{h}\left(V_{t+h,t}-\id{}\right) \right\| = 0
\end{equation}
by triangle inequality; since $V_{t,s}$ enjoys differential equation
\begin{equation}
    \frac{d\Lambda_t}{dt} = \lim_{h\to 0^+}\frac{V_{t+h,t}-\id{}}{h}\circ\Lambda_t = \mathcal{L}_t\circ\Lambda_t,
\end{equation}
we infer $\mathcal{L}_t = \mathcal{R}_t + \transpose \circ \mathcal{S}_t$, or $L_t = R_{t} + \transpose\circ S_{t}$ for maps $R_{t}$, $S_{t} : B(\mathcal{H})\to B(\mathcal{H})$ dual to $\mathcal{R}_{t}$, $\mathcal{S}_t$. Since $\left(\frac{1}{h}Y_{t+h,t}\right)_{h\geqslant 0}$ is a convergent net of CP maps, $\mathcal{S}_t$ is also CP since $\cpe{B_1 (\mathcal{H})}$ is closed (in supremum norm induced by trace norm); hence $S_t \in \cpe{B(\mathcal{H})}$. Now, let $\phi_{t,s}$, $\psi_{t,s}$ be maps on $B(\mathcal{H})$ dual to $Z_{t,s}$, $Y_{t,s}$ and define
\begin{equation}
    f_X (t,s) = \phi_{n;t,s}(X^\hadj X) - \phi_{n;t,s}(X^\hadj)\phi_{n;t,s}(X)
\end{equation}
for any $X \in \matra{n}{B(\mathcal{H})}$, $n\in\naturals$ where we denoted $\phi_{n;t,s} = \id{n}\otimes\phi_{t,s}$ and $\psi_{n;t,s} = \id{n}\otimes\psi_{t,s}$. With direct calculation,
\begin{align}\label{eq:DmapfXder}
    \frac{\partial f_X}{\partial t}(s,s) &= \left.\frac{\partial \phi_{n;t,s}(X^\hadj X)}{\partial t}\right|_s - \left.\frac{\partial \phi_{n;t,s}(X^\hadj)}{\partial t}\right|_s X - X^\hadj \left.\frac{\partial \phi_{n;t,s}(X)}{\partial t}\right|_s \\
    &= R_{n,s}(X^\hadj X) - R_{n,s}(X^\hadj )X - X^\hadj R_{n,s}(X). \nonumber
\end{align}
On the other hand, since $f_X(t,t) = 0$ we have $\frac{1}{h}\left(f_X(t+h,t)-f_X(t,t)\right) = \frac{1}{h}f_X(t+h,t) \geqslant 0$ and so \eqref{eq:DmapfXder} is nonnegative for all $X$, $n$ and $s\geqslant 0$, i.e.~$R_t$ has a complete dissipation property for all $t$. Trace preservation of $\Lambda_t$ yields $L_t(\unit) = R_t (\unit) + S_t (\unit)^\transpose = 0$, so maps $R_t$, $S_t$ satisfy all the assumptions of Theorem \ref{thm:WstarStandardForm} for all $t\geqslant 0$ after identification $L_1 = R_t$, $L_2 = \transpose\circ S_t$, and hence there exists a self-adjoint $H_t \in B(\mathcal{H})$ and a decomposable map $\varphi_t \in B(B(\mathcal{H}))$ such that $L_t$ is of a form \eqref{eq:StandardForm3}. It is then a simple exercise to show its predual $\mathcal{L}_t$ must be of a form \eqref{eq:LtDdiv} with $\Phi_t$ predual to $\varphi_t$.

For part \ref{thm:DynMapsStandardFormItem2}, note that when $\mathcal{L}_t$ is as in \eqref{eq:LtDdiv} then $L_t$ is of a form \eqref{eq:StandardForm3}, with a self-adjoint $H_t$ and decomposable $\varphi_t$ replacing $H$ and $\varphi$, respectively. Theorems \ref{thm:WstarDecomposition} and \ref{thm:CstarStandardForm} then yield $L_t$ generates a norm continuous semigroup $(e^{sL_t})_{s\geqslant 0}$ of decomposable unital maps on $B(\mathcal{H})$, for every $t$. Consequently, $(e^{s\mathcal{L}_t})_{s\geqslant 0}$ is a uniformly continuous semigroup of weakly-* continuous, decomposable trace preserving maps on $B_1(\mathcal{H})$. Then, the \emph{time-splitting formula} \cite{Rivas2012} yields
\begin{equation}
    V_{t,s} = \lim_{\max{|t_{j+1}-t_j|}\to 0}\prod_{j=n-1}^{0} e^{(t_{j+1}-t_j)\mathcal{L}_{t_j}}, \quad t = t_n \geqslant ... \geqslant t_0 = s,
\end{equation}
is a limit of a sequence of decomposable and trace preserving maps. This concludes the proof.
\end{proof}

\section{Summary}

We investigated semigroups of decomposable positive maps on C*-algebras with special attention granted to algebraic structure of their infinitesimal generators, with additional assumption of smooth decomposability. Our results provide an extension of the work of Lindblad \cite{Lindblad1976} onto broader class of decomposable maps. It was shown that properties of both the semigroup and its generator resemble completely positive case explored extensively in \cite{Lindblad1976} and may be treated in quite a similar manner. A notion of D-divisible quantum dynamical maps on $B_1 (\mathcal{H})$ was formulated for a general Hilbert space $\mathcal{H}$, effectively extending previous results from matrix algebras \cite{Szczygielski2023}. It remains unclear if such a strong condition of \emph{smooth} decomposability is really necessary for key theorems to apply; we suspect that it might not be the case or that this requirement can be at least weakened. A natural open question arises from our analysis which concerns determining internal structure of generators of \emph{indecomposable} semigroups, perhaps with some additionl constraints or specific internal symmetries. This problem was tackled in the past e.g.~by specifying conditions for $L$ to generate a \emph{positive} or \emph{$k$-positive} semigroup \cite{Gorini1976,Chruscinski2011,Chruscinski2019}, or Kadison-Schwarz semigroup \cite{Lindblad1976}.


\bibliographystyle{unsrt}
\bibliography{DecomposableDynamicsGeneralBib}

\end{document}